\theoremstyle{plain}
\newtheorem{theorem}{Theorem}[section]
\newtheorem{corollary}[theorem]{Corollary}
\newtheorem{lemma}[theorem]{Lemma}
\newtheorem{proposition}[theorem]{Proposition}
\theoremstyle{definition}
\newtheorem{definition}[theorem]{Definition}
\theoremstyle{remark}
\newtheorem{remark}[theorem]{Remark}
\newtheorem{example}[theorem]{Example}
\numberwithin{equation}{section}
\renewcommand{\P}{\mathbb P}
\newcommand{\C}{\mathbb C}
\newcommand{\K}{\mathbb K}
\newcommand{\I}{\mathbb I}
\newcommand{\T}{\mathbb T}
\newcommand{\PGL}{\mathrm {PGL}}
\newcommand{\Cc}{\mathcal C}
\newcommand{\B}{\mathbb B}
\newcommand{\id}{{\rm id}}
\newcommand{\FS}{{\text{\rm \tiny FS}}}
\newcommand{\ddc}{{dd^c}}
\newcommand{\dbar}{{\overline\partial}}
\renewcommand{\H}{\mathbb{H}}
\subjclass[2010]{37F80
(primary), 32U05, 32H50, 37A25, 60F05 (secondary)}
\keywords{Complex H\'enon maps, 
 Exponential mixing of all orders, 
 Central Limit Theorem}
\begin{document} 

\hyphenpenalty=10000

\title{Every complex H\'enon map is exponentially mixing
\\ of all orders  and satisfies the CLT}

\begin{author}[F.~Bianchi]{Fabrizio Bianchi}
\address{ 
CNRS, Univ. Lille, UMR 8524 - Laboratoire Paul Painlev\'e, F-59000 Lille, France}
  \email{fabrizio.bianchi$@$univ-lille.fr}
\end{author}

\begin{author}[T.C.~Dinh]{Tien-Cuong Dinh}
\address{National University of Singapore, Lower Kent Ridge Road 10,
Singapore 119076, Singapore}
\email{matdtc$@$nus.edu.sg }
\end{author}

\maketitle

\begin{abstract}
 We show that the  measure of maximal entropy
 of every
complex H\'enon map is 
 exponentially mixing of all orders for H\"older observables. 
As a consequence,
 the Central Limit  Theorem 
 holds 
 for
all 
 H\"older observables.
\end{abstract}

\bigskip

\noindent
{\bf Notation.} 
The pairing $\langle\cdot, \cdot\rangle$
 is used for the integral of a function with respect to a measure or more
generally the value of a current at a test form.
 By
 $(p,p)$-currents
 we mean 
 currents of bi-digree $(p,p)$.
 Given $k\geq 1$,
we denote by $\omega_{\FS}$ the Fubini-Study form on $\mathbb P^k = \mathbb P^k(\mathbb C)$.
The mass of a positive closed $(p,p)$-current $R$ on $\mathbb P^k$
is equal to $\langle R,\omega_{\FS}^{k-p}\rangle$ and is denoted by $\|R\|$.
The notations $\lesssim$ and $\gtrsim$ 
stand for inequalities up to
a multiplicative constant.
If $R$ and $S$
are two real currents of the same bi-degree, we write $|R|\leq S$ when
$S\pm R\geq 0$.
 Observe that this forces $S$ to be positive.

\section{Introduction}

 H{\'e}non maps are among
 the most studied 
  dynamical systems that exhibit interesting chaotic behaviour.
They were introduced by Michel Hénon 
in the real setting
as a simplified model of the Poincaré section for
the Lorenz model, see,
 e.g., \cite{BC91,Henon76}.
H{\'e}non maps are also actively studied in the complex setting,
where complex analysis offers additional powerful tools. 
The reader can find in
 the work of Bedford, Fornaess, Lyubich, Sibony, Smillie, and the second author
fundamental dynamical properties of these systems, see
\cite{BLS93,BS91,BS92,DS14rigidity,Fornaess96,FS92,Sibony99}
and the references therein.
 It is shown in \cite{BLS93} that the measure of maximal entropy of such systems is Bernoulli. In particular, it is 
mixing of all orders. On the other hand, the control of the speed of mixing
for general dynamical systems 
is a challenging problem, and usually one can obtain it only under strong hyperbolicity assumptions on the system.
The main goal of 
this work is 
prove that
 the measure of maximal entropy of any complex H\'enon map
is \emph{exponentially mixing of all orders} with respect to H{\"o}lder observables.
As a consequence, 
we also
 solve a long-standing question 
proving
 the Central Limit Theorem
 for all H{\"o}lder observables
 with respect to
  the maximal entropy measures of 
  complex
H{\'e}non maps.

\medskip

Let us first recall the following 
 general definition.

\begin{definition}\label{d:exp-mixing-all-orders}
Let $(X,f)$
be a dynamical system
and $\nu$ an $f$-invariant measure.
Let $(E, \|\cdot\|_E)$ be a normed space of real functions on $X$
with 
$\|\cdot \|_{L^p(\nu)} \lesssim \|\cdot \|_E$ for all $1\leq p < \infty$.
We say that $\nu$ is \emph{exponentially mixing of order $\kappa\in\mathbb N^*$} 
for 
observables in $E$ 
 if 
 there exist
constants
$C_{\kappa}>0$
and
 $0< \theta_{\kappa} < 1$
  such that,  for all
 $g_0, \dots, g_{\kappa}$ in $E$
 and integers $0=: n_0 \leq n_1 \leq \dots \leq n_{\kappa}$, 
 we have
\[
\Big|
\langle \nu,
 g_0 ( g_1 \circ  f^{n_1} )
\dots
 (g_{\kappa} \circ f^{n_{\kappa}})
\rangle
- \prod_{j=0}^{\kappa} \langle  \nu, g_j
\rangle
\Big|
\leq C_{\kappa}
\cdot \Big(
\prod_{j=0}^{\kappa} 
\|g_j\|_{E}
\Big) \cdot
\theta_{\kappa}^{\min_{0\leq j \leq \kappa-1} ( n_{j+1}- n_j)}.
\] 
We say that $\nu$ is \emph{exponentially mixing of all orders} 
for 
observables in $E$ 
 if 
it is exponentially mixing of order $\kappa$ for every $\kappa \in \mathbb N$.
\end{definition}

A recent major result by Dolgopyat, Kanigowski, and Rodriguez-Hertz
\cite{DKRH}
 ensures that, under suitable assumptions on the system,
the exponential mixing of order 1 implies that the system is Bernoulli. In particular, it implies the mixing of all orders.
(with no control on the rate of decay of correlation).
 It is a main open question whether 
the exponential mixing of order 1 implies the exponential mixing of all orders, see for instance \cite[Question 1.5]{DKRH}.

\medskip

Let now $f$ be a complex H\'enon map on $\mathbb C^2$. It is 
a polynomial diffeomorphism of $\mathbb C^2$. 
We can associate to $f$ its unique measure of maximal entropy $\mu$
\cite{BLS93,BS91,BS92,Sibony99}, see 
Section \ref{s:regular} for details. 
It was established by the second author in \cite{Dinh05} that such measure
if exponential mixing of order 1 for H{\"o}lder observables, 
see also  Vigny \cite{Vigny15} and Wu \cite{Wu22}. Similar
results were obtained
by Liverani \cite{Liverani95} 
in the case of uniformly hyperbolic diffeomorphisms
and Dolgopyat \cite{Dolgopyat98} for Anosov flows.

\begin{theorem}\label{t:henon-mixing}
Let $f$ be a complex H\'enon map and $\mu$
 its measure of maximal entropy. Then, for every $\kappa\in \mathbb N^*$,
  $\mu$
is exponential mixing 
of order $\kappa$ 
as in Definition \ref{d:exp-mixing-all-orders}
for
$\Cc^\gamma$
 observables ($0<\gamma\leq 2$),
with $\theta_\kappa = d^{-(\gamma/2)^{\kappa+1} /2}$.
\end{theorem}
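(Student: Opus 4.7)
\medskip

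\textbf{Proof plan.} I would proceed by induction on the order~$\kappa$. The base case $\kappa=1$ is the exponential mixing of order one established in \cite{Dinh05}. For the inductive step, fix observables $g_0,\dots,g_\kappa\in\Cc^\gamma$ and times $0=n_0\leq\dots\leq n_\kappa$, set $n:=\min_{0\leq j\leq\kappa-1}(n_{j+1}-n_j)$, and pick $j^*$ realizing this minimum. Using the $f$-invariance of $\mu$ to translate the time sequence, I would rewrite the multi-correlation in the form $\langle\mu, F_1\cdot (F_2\circ f^n)\rangle$, where $F_1$ is a product of the factors $g_i \circ f^{n_i-n_{j^*}}$ for $i\leq j^*$ and $F_2$ of the factors $g_i \circ f^{n_i-n_{j^*+1}}$ for $i\geq j^*+1$. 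Both $F_1$ and $F_2$ involve at most $\kappa$ factors, and by the choice of $j^*$ the minimum gap within each of the two sub-sequences is again at least $n$; in particular the induction hypothesis of order at most $\kappa-1$ controls $\langle\mu,F_1\rangle$ and $\langle\mu,F_2\rangle$.

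The principal analytic obstacle is that $F_1$ and $F_2$ are \emph{not} Hölder with controlled norms: the seminorm of a composition $g_i\circ f^m$ blows up like $L^{\gamma|m|}$, where $L$ bounds the differential of $f^{\pm1}$ on $\supp\mu$. Hence the order-one mixing cannot be invoked on the pair $(F_1,F_2)$ in a black-box way.

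To bypass this I would introduce a smoothing scale $\epsilon=\epsilon(n)$ and replace $F_j$ by regularizations $F_j^\epsilon$ satisfying two competing requirements: a small $L^1(\mu)$ approximation error, controlled by a fractional power of $\epsilon$ (weighted by the blown-up Hölder seminorm of $F_j$); and a $\Cc^2$ (or $\Cc^\gamma$) norm that grows only polynomially in $\epsilon^{-1}$. The order-one mixing applied to the regularized pair then gives, up to lower-order terms,
$$\bigl\langle \mu, F_1^\epsilon\cdot(F_2^\epsilon\circ f^n)\bigr\rangle = \langle \mu,F_1^\epsilon\rangle\,\langle \mu,F_2^\epsilon\rangle + O\bigl(\theta_1^n\,\epsilon^{-A}\bigr),$$
and balancing this error against the approximation errors yields an effective decay of the form $\theta^n$ with $\theta<1$ strictly larger than $\theta_1$. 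Iterating this balancing through the induction is exactly what produces the degradation from $\theta_1=d^{-\gamma/2}$ to $\theta_\kappa=d^{-(\gamma/2)^{\kappa+1}/2}$. Finally, the inductive hypothesis applied to $\langle \mu,F_j^\epsilon\rangle$ expands these factors into products of individual means $\langle\mu,g_i\rangle$, up to further exponentially small errors still governed by $n$, completing the induction.

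The hardest part I anticipate is the design of the regularizations $F_j^\epsilon$ achieving the two competing bounds strongly enough to absorb the exponential blow-up of the Hölder norms driven by the largest time $n_\kappa$, while losing only a fixed geometric factor in the exponent per induction step. Naive convolution in $\bC^2$ interacts poorly with both the singular support of $\mu$ and the non-uniform expansion of $f$; one likely has to work with a dynamically adapted class of test objects (for instance DSH functions, or $(1,1)$-currents controlled by their super-potentials, in the spirit of \cite{Dinh05,DS14rigidity,Vigny15,Wu22}), regularize the relevant Green currents by a $\ddc$-procedure, and then transfer the estimates back to the Hölder setting via a pluripotential interpolation inequality. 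Arranging that the loss at each induction step is precisely of the geometric form governing $(\gamma/2)^{\kappa+1}$, rather than worse, is the principal technical challenge.
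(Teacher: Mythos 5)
Your central mechanism cannot deliver the required uniformity in the times $n_j$, and that uniformity is precisely the crux of the theorem. In your splitting, $F_2=\prod_{i\geq j^*+1} g_i\circ f^{\,n_i-n_{j^*+1}}$ has H\"older (or $\Cc^2$) norm growing like $L^{\gamma(n_\kappa-n_{j^*+1})}$, and nothing constrains $n_\kappa$ in terms of $n=\min_j(n_{j+1}-n_j)$: the times may satisfy $n_\kappa\gg n$. To make the $L^1(\mu)$ approximation error of a regularization $F_2^\epsilon$ small you must take $\epsilon$ exponentially small in $n_\kappa$; then the error term $\theta_1^{\,n}\epsilon^{-A}$ coming from order-one mixing is exponentially large in $n_\kappa$ and is not compensated by $\theta_1^{\,n}$. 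No choice of the scale $\epsilon$ (as a function of $n$, or even of $n$ and $n_\kappa$) makes both errors decay like $\theta^{\,n}$ uniformly in the $n_j$'s; the two requirements are incompatible as soon as $n_\kappa/n\to\infty$. You identify this as the principal difficulty and gesture towards ``dynamically adapted test objects'', but the proposal contains no mechanism that actually neutralizes the growth driven by $n_\kappa$, and no isotropic smoothing can. The attribution of the exponent $(\gamma/2)^{\kappa+1}$ to an iterated loss in scale-balancing is also off: in the paper that exponent comes from interpolating each of the $\kappa+1$ observables between $\Cc^0$ and $\Cc^2$ (for $\gamma=2$ the rate is $d^{-1/2}$ for every $\kappa$), not from a per-step geometric degradation of the mixing rate.

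What replaces your balancing in the actual proof is a positivity argument in which the norm growth is cancelled by invariance, not beaten by a small scale. After reducing to $\gamma=2$, one passes to $F=(f,f^{-1})$ on $\C^4$, writes the $(\kappa+1)$-fold correlation as the pairing of $d^{-n_1}(F^{n_1/2})_*[\Delta]$ against currents $\phi^\pm\,\T_+$ with $\T_+=T_+\otimes T_-$, and uses: (a) the one-sided estimate of Proposition \ref{p:acta} and Corollary \ref{c:acta}, which gives only an upper bound but with a constant independent of the current being pushed forward, hence applicable to the singular current $[\Delta]$ after harmless regularization, provided the test data satisfies $\ddc\geq 0$ on the relevant set; this one-sidedness is compensated by building two shifted products $\phi^\pm$ (Lemma \ref{l:phi}) so that both signs of the correlation difference are bounded above; and (b) Lemma \ref{l:new-norm-star}, where the invariance $d^{-2\ell}(F^{\ell})^*\T_+=\T_+$ shows that $\|\ddc\big(\prod_j g_j\circ F^{\ell_j}\big)\wedge\T_+\|_{*,U_1}\leq c_\kappa$ with $c_\kappa$ independent of the $\ell_j$'s: wedging with the invariant current exactly absorbs the exponential growth of derivatives that defeats your scheme. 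The induction on $\kappa$ is used only to dispose of the lower-order terms created by the cutoff $\chi$ and the constants $M$, never to feed a pair $(F_1,F_2)$ with exploding norms into order-one mixing.
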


For endomorphisms of $\mathbb P^k (\mathbb C)$, the 
exponential mixing for all orders 
for the measure
of maximal entropy and  H{\"o}lder
observables was established in \cite{DNS10}.
We recently proved such property 
for
a large class of invariant measures
with
 strictly positive Lyapunov exponents \cite{BD22}.
This was
done by constructing 
a suitable (semi-)norm on functions that 
turns the 
 so-called Ruelle-Perron-Frobenius operator (suitably normalized)
into a contraction. 
As far as we know, the present paper gives the first instance where
the exponential mixing of all orders is established for general 
dynamical systems with both positive and negative Lyapunov exponents.

\medskip

 The exponential mixing of all orders is one of the strongest properties in dynamics. It was recently
shown to imply a number of statistical properties, see for instance \cite{BG,DFL}. As an example,
a 
consequence of 
Theorem \ref{t:henon-mixing}
 is
 the following result.
Take $u\in L^1 (\mu)$.
As $\mu$ is ergodic, Birkhoff's 
ergodic theorem states that
\[
n^{-1} S_n (u):=
n^{-1} \big( u (x)+ u \circ f(x) + \dots + u \circ f^{n-1}(x) \big)\to  \langle\mu, u\rangle
\quad \mbox{ for } \mu-\mbox{a.e. } x \in X.
\]
We say that $u$ \emph{satisfies the Central Limit Theorem}
(CLT)
with variance $\sigma^2 \geq 0$ with respect to $\mu$
if $n^{-1/2} (S_n (u)  - n\langle\mu,u\rangle )\to \mathcal N(0,\sigma^2)$ in law,
 where $\mathcal N (0,\sigma^2)$
denotes the
(possibly degenerate, for $\sigma =0$)
 Gaussian distribution 
with mean 0 and variance $\sigma^2$, i.e., 
 for any interval $I \subset \mathbb R$ we have
\[
\lim_{n\to \infty} \nu \Big\{
\frac{S_n (u) - n\langle\mu,u\rangle }{\sqrt{n} } \in I
\Big\}
=
\begin{cases}
1 \mbox{ when } I \mbox{ is of the form } I=(-\delta,\delta)  & \mbox{ if } \sigma^2=0,\\
\frac{1}{\sqrt{2\pi\sigma^2}}\displaystyle\int_I e^{-t^2 / (2\sigma^2)} dt &  \mbox{ if }
\sigma^2 >0.
\end{cases} \]

\medskip

By \cite{BG}, the following is then a 
 consequence of Theorem
 \ref{t:henon-mixing}.
We refer to \cite{BD22,DPU96,DS06CPAM,Dupont10,PRL07,SUZ14,SUZ15}
for other cases where the CLT
for H{\"o}lder observables 
was
 established in holomorphic dynamics. 
 As is the case for Theorem \ref{t:henon-mixing},
 this is the first time that this is done
 for systems with both positive and negative Lyapunov exponents.

\begin{corollary}\label{c:henon-CLT}
Let $f$ be a complex H\'enon map and $\mu$
 its measure of maximal entropy. Then
all
H\"older observables $u$
satisfy the
Central Limit Theorem  with respect to
 $\mu$
with
\[\sigma^2
=
\sum_{n\in \mathbb Z} \langle\mu, \tilde u (\tilde u \circ f^n) \rangle
=
\lim_{n\to \infty}
\frac{1}{n}\int_{X}
(\tilde u+ \tilde u \circ f + \ldots + \tilde u \circ f^{n-1})^2 d\mu,
\]
where $\tilde u := u-\langle\mu,u\rangle$.
\end{corollary}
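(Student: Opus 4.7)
The plan is to derive the corollary from Theorem~\ref{t:henon-mixing} via the abstract CLT result of \cite{BG}, which deduces the Central Limit Theorem for H\"older observables from exponential mixing of all orders. Let me outline the underlying strategy, which proceeds by the method of moments. Replacing $u$ by $\tilde u := u - \langle \mu, u \rangle$ (still H\"older, with zero mean), the goal is to show that $n^{-\kappa/2}\langle \mu, (S_n \tilde u)^\kappa\rangle$ converges to the $\kappa$-th moment of $\mathcal N(0,\sigma^2)$ for every integer $\kappa \geq 1$, and then invoke the classical fact that convergence of all moments to those of a Gaussian implies convergence in distribution.

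First I would verify that $\sigma^2$ is well defined. Exponential mixing of order $1$ applied to the pair $(\tilde u, \tilde u)$ yields $|\langle \mu, \tilde u\,(\tilde u\circ f^n)\rangle| \lesssim \theta_1^n \|\tilde u\|_{\mathcal C^\gamma}^2$, so the series $\sum_{n\in\mathbb Z}\langle\mu,\tilde u(\tilde u\circ f^n)\rangle$ converges absolutely; the equivalence with the Ces\`aro expression for $\sigma^2$ then follows by expanding $(S_n\tilde u)^2$, counting pairs $(i,j)$ according to $|i-j|$, and passing to the limit.

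The core of the argument expands $\langle\mu,(S_n\tilde u)^\kappa\rangle$ as a sum over $\kappa$-tuples $(i_1,\dots,i_\kappa)\in\{0,\dots,n-1\}^\kappa$ of $\langle \mu, \prod_j \tilde u \circ f^{i_j}\rangle$. After ordering each tuple, Theorem~\ref{t:henon-mixing} is applied, with observables taken to be products of consecutive $\tilde u$'s lying in a short time-window, so that gaps between windows are large. Since $\langle\mu,\tilde u\rangle=0$, tuples whose indices split into several well-separated singletons contribute only negligibly, while clusters of size $\geq 3$ produce contributions of order $O(n^{\kappa/2-1/2})$. A careful bookkeeping shows that only \emph{pair matchings} of $\{1,\dots,\kappa\}$ survive at the leading order $n^{\kappa/2}$; each such matching produces a factor $\sigma^2$ (via Step~1), so that the total precisely matches the Wick formula for the moments of $\mathcal N(0,\sigma^2)$.

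The main obstacle is this combinatorial step: one must iteratively peel off large gaps using the higher-order mixing estimates of Theorem~\ref{t:henon-mixing} and control uniformly in $n$ the many residual error terms arising from mixed cluster configurations. This is precisely what is carried out abstractly in \cite{BG}, so in practice the proof reduces to checking that H\"older observables fit the framework (which is immediate from $\|\cdot\|_{L^p(\mu)}\lesssim\|\cdot\|_{\mathcal C^\gamma}$) and to feeding Theorem~\ref{t:henon-mixing} as input for every order $\kappa$; the conclusion is then a direct application of the cited theorem.
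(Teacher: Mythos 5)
Your proposal is correct and follows essentially the same route as the paper: Corollary~\ref{c:henon-CLT} is obtained as a direct application of the abstract result of \cite{BG} to the exponential mixing of all orders for H\"older observables given by Theorem~\ref{t:henon-mixing}. The method-of-moments machinery you sketch is exactly what \cite{BG} carries out, so no further argument is needed beyond the verification that H\"older observables fit their framework, as you note.
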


\medskip

Theorem \ref{t:henon-mixing} and Corollary \ref{c:henon-CLT}
in particular apply to any real H{\'e}non map
of maximal entropy
\cite{BS04}, 
i.e., complex H{\'e}non maps with real coefficients and 
whose measure of maximal entropy
 is supported by
  $\mathbb R^2$.
They
 hold also in the larger settings of 
regular automorphisms of $\mathbb C^k$ 
in any dimension \cite{Sibony99}, 
see Definition \ref{d:regular-aut} and
Remark \ref{r:autCK},
and 
invertible
 horizontal-like maps in any dimension \cite{DNS08,DS06AIF}, see 
 Remark \ref{r:HL}.
 We postpone
the case of automorphisms of
 compact K\"ahler manifolds to 
the forthcoming paper
 \cite{BD23},
 see Remark \ref{r:Kahler}.

 \medskip

 Our method to prove Theorem \ref{t:henon-mixing}
  relies on pluripotential theory and on the 
 theory of positive closed currents. The idea
 is as follows. Using the classical theory of interpolation \cite{Triebel95}, we can reduce the problem to the case $\gamma=2$. For simplicity, assume that $\|g_j\|_{\Cc^2}\leq 1$ for all $j$.
 The measure of maximal entropy $\mu$
of a H\'enon map $f$ of $\C^2$ of algebraic degree $d\geq 2$
is the intersection $\mu=T_+\wedge T_-$ of the two Green currents  $T_+$ and $T_-$ of $f$
\cite{BS91,Sibony99}.
If we identify $\C^2$ to an affine chart of $\P^2$ in the standard way, these currents are the unique
positive closed 
 $(1,1)$-currents of mass 1 on $\P^2$, without mass at infinity,  
satisfying $f^* T_{+} = d T_{+}$
and
$f_* T_{-} = d T_{-}$.

  \medskip

 Consider the automorphism $F$
 of $\mathbb C^4$ given by $F:=(f,f^{-1})$. 
 Such automorphism also 
 admits Green currents $\mathbb T_+=T_+\otimes T_-$ and $\mathbb T_-=T_-\otimes T_+$. These currents
  satisfy
 $(F^n)^* \mathbb T_+= d^2 \mathbb T_+$ and  $(F^n)_* \mathbb T_-= d^2 \mathbb T_-$.
 Under mild assumptions on their support, 
other positive  closed
$(2,2)$-currents $S$ of mass 1
of $\mathbb P^4$ 
 satisfy the estimate
 \begin{equation}\label{eq:intro-conv-1}
 |\langle d^{-2n} (F^n)_{*} (S) - \mathbb T_-, \Phi\rangle| \leq c_{S,\Phi} d^{-n} 
  \end{equation}
 when 
 $\Phi$
 is a sufficiently smooth  test form. Here, $c_{S,\Phi}$ is a constant depending on $S$ and $\Phi$.

 \medskip
 
We show that proving
 the exponential mixing 
for $\kappa +1$
observables
  $g_0, \dots, g_{\kappa}$ with $\|g_j\|_{\Cc^2}\leq1$ 
can be reduced
to proving
 the convergence (we assume that $n_1$ is even for simplicity)
\begin{equation}\label{eq:intro-goal}
|\langle d^{-n_1} (F^{n_1/2})_{*}[\Delta] -  \mathbb T_-,
\Theta
 \rangle| \lesssim d^{-  \min_{0\leq j \leq \kappa-1} (n_{j+1}-n_j)/2},
\end{equation}
 where $[\Delta]$ denotes the current of integration on the diagonal $\Delta$ of $\C^2\times \C^2$, $(z,w)$ denote
 the
  coordinates on $\C^2\times \C^2$ and 
$$\Theta := g_0(w) g_1(z) (g_2\circ f^{n_2-n_1}(z)) \dots (g_\kappa \circ f^{n_\kappa-n_1}(z)) \mathbb T_+.$$
 A crucial point here is that the estimate should not only be uniform in the $g_j$'s, but also in the $n_j$'s.
Note 
 also
that the current $[\Delta]$ is singular and
the dependence of the constant $c_{S,\Phi}$  in \eqref{eq:intro-conv-1} from $S$
makes it difficult to employ regularization techniques 
 to deduce the
 convergence \eqref{eq:intro-goal}
 from \eqref{eq:intro-conv-1}.

 \medskip
 
 The key point here is to notice that, when $dd^c \Phi \geq 0$ (on a suitable open set),
 one can also
 get the following variation of \eqref{eq:intro-conv-1}:
\begin{equation}\label{eq:intro-conv-2}
 \langle d^{-2n} (F^n)_{*} (S) - \mathbb T_-, \Phi\rangle
  \leq c_{\Phi} d^{-n}.
  \end{equation}
With respect to \eqref{eq:intro-conv-1}, only the bound
from above
is
 present, but the constant $c_\Phi$ is now 
independent of $S$. This permits to regularize $\Delta$ and work as if this current were smooth. Note also that,
although $\Theta$ is not smooth, we can handle it using a similar regularization. 

\medskip

Working by induction, we show that
it is possible to replace both $\Theta$ and
$-\Theta$ in \eqref{eq:intro-goal}
with currents $\Theta^{\pm}$ satisfying $dd^c \Theta^\pm \geq 0$. This permits to deduce
the estimate \eqref{eq:intro-goal} from two upper bounds given by \eqref{eq:intro-conv-2} for
$\Theta^\pm$, completing the proof.

\subsection*{Acknowledgments}
We
would like to thank the National University of Singapore,
the Institut de Math\'ematiques de Jussieu-Paris Rive Gauche,
and Xiaonan Ma
 for the warm welcome and the excellent work conditions.
We also thank
 Romain Dujardin, Livio Flaminio,
 and 
 Giulio Tiozzo  for very useful remarks and discussions.

This project has received funding from
 the French government through the Programme
 Investissement d'Avenir
 (LabEx CEMPI /ANR-11-LABX-0007-01,
ANR QuaSiDy /ANR-21-CE40-0016,
ANR PADAWAN /ANR-21-CE40-0012-01)
and the NUS
and MOE through the grants
A-0004285-00-00
and 
MOE-T2EP20120-0010.

\section{Regular automorphisms of $\mathbb C^k$ and convergence towards Green currents}\label{s:regular}

Let $F$ be a polynomial automorphism of $\mathbb C^k$. We still denote by $F$
its extension  as a birational map of  $\mathbb P^k$. 
Denote by $\H_{\infty}:=\P^k\setminus\C^k$
the hyperplane at infinity and by $\I^+, \I^-$ the indeterminacy sets of $F$ and $F^{-1}$ respectively. They
are analytic sets strictly contained in $\H_\infty$. 
If $\I^+=\varnothing$ or $\I^-=\varnothing$, then both of them
are empty and $F$ is given by a linear map and its dynamics is easy to describe. Hence, we 
assume that $\I^\pm\neq \varnothing$.
The following definition is due to Sibony \cite{Sibony99}.

\begin{definition}\label{d:regular-aut}
We say that $F$ is a \emph{regular automorphism of} $\mathbb C^k$
if $\I^\pm\not=\varnothing$ and $\I^+\cap \I^-= \varnothing$.
\end{definition}

Given $F$ a regular automorphism of $\mathbb C^k$, it is clear that $F^{-1}$ is also regular. We denote
by $d_+(F)$ and $d_-(F)$
the algebraic degrees
 of $F$ and $F^{-1}$ respectively. Observe that $d_{\pm}(F)\geq 2$,  
$d_+ (F)= d_-(F^{-1})$ and $d_- (F)= d_+(F^{-1})$. Later, we will drop the letter $F$ and just write $d_\pm$ instead of $d_\pm(F)$ for simplicity.  
We will recall here some basic properties of $F$ and refer the reader to \cite{BS91,DS14rigidity,Fornaess96,FS92,Sibony99} for details.

\begin{proposition}\label{p:general-aut}
Let $F$ be a regular automorphism of $\mathbb C^k$ as above.
\begin{enumerate}
\item[{\rm (i)}]
There exists an integer $1\leq p\leq k-1$
 such that
$\dim \I^+= k-p-1$, $\dim \I^-= p-1$, and $d_+ (F)^p= d_-(F)^{k-p}$.
\item[{\rm (ii)}] The analytic sets $\I^\pm$ are irreducible and we have 
\[
F(\H_\infty \setminus \I^+) =F(\I^-)= \I^-
\quad \mbox{ and }
\quad 
F^{-1} (\H_\infty \setminus \I^-)= F^{-1}(\I^+)= \I^+.
\]
\item[{\rm (iii)}] For every $n\geq 1$, both $F^{n}$ and $F^{-n}$ are regular automorphisms of $\mathbb C^k$, of algebraic degrees
$d_+(F)^n$ and $d_-(F)^n$,
and indeterminacy sets $\I^+$ and $\I^-$, respectively.
\end{enumerate}
\end{proposition}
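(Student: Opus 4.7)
The plan is to treat (ii) first, since it describes the set-theoretic dynamics on $\H_\infty$, then derive (i) by a dimension-and-degree count, and finally deduce (iii) from an algebraic-stability argument.

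\textbf{Part (ii).} The fact that $F|_{\C^k}$ is a biholomorphism immediately gives $F(\H_\infty\setminus\I^+)\subset\H_\infty$. I would upgrade this to $F(\H_\infty\setminus\I^+)\subset\I^-$ by contradiction: if some $q\in\H_\infty\setminus\I^+$ mapped to $p\in\H_\infty\setminus\I^-$, then $F$ and $F^{-1}$ would both be regular near $q$ and $p$, and $F$ would be a local biholomorphism at $q$ preserving $\H_\infty$. Propagating this local structure by analytic continuation would force $F^{-1}$ to be regular across a nonempty Zariski-open subset of $\I^-$, contradicting the very definition of $\I^-$. Equality with $\I^-$ then holds because $F|_{\H_\infty\setminus\I^+}$ is a proper morphism onto a subvariety of $\I^-$, and applying the same analysis to $F^{-1}$ shows every point of $\I^-$ is hit. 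The identity $F(\I^-)=\I^-$ follows: one inclusion uses $\I^-\subset\H_\infty\setminus\I^+$ (from the regularity hypothesis) together with $F(\H_\infty\setminus\I^+)=\I^-$, and the reverse comes from a symmetric argument applied to $F^{-1}$ on $\I^+$. Irreducibility of $\I^\pm$ is then free: $\H_\infty\simeq\P^{k-1}$ is irreducible, hence so is $\H_\infty\setminus\I^+$, and consequently its image $\I^-$ under the regular morphism $F$; the claim for $\I^+$ is symmetric.

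\textbf{Part (i).} Set $p:=\dim\I^-+1$. The proper surjective morphism $F\colon\H_\infty\setminus\I^+\to\I^-$ has generic fiber of pure dimension $(k-1)-(p-1)=k-p$. Running the symmetric analysis for $F^{-1}$ produces a proper surjection $\H_\infty\setminus\I^-\to\I^+$, and matching the fiber dimensions (the blow-up fibers of $F^{-1}$ above $\I^-$ parametrize the fibers of $F$ above $\I^-$) pins down $\dim\I^+=k-p-1$. The numerical identity $d_+^p=d_-^{k-p}$ I would extract from the duality $\delta_p(F)=\delta_{k-p}(F^{-1})$ valid for any birational self-map of $\P^k$, combined with the absence of a degree drop in the $p$-th iterated pullback under the regularity hypothesis: the only hypersurface contracted by $F$ is $\H_\infty$, and it is sent into $\I^-$ of codimension $k-p+1>p$, so $F^*(\omega_{\FS}^p)$ represents the class $d_+^p[\omega_{\FS}^p]$ with no correction from $\I^-$; symmetrically $\delta_{k-p}(F^{-1})=d_-^{k-p}$, and the two sides must agree.

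\textbf{Part (iii) and main obstacle.} The key observation for (iii) is algebraic stability: for $q\notin\I^+$, either $q\in\C^k$ and then $F(q)\in\C^k$, or $q\in\H_\infty\setminus\I^+$ and then $F(q)\in\I^-\subset\P^k\setminus\I^+$ by disjointness. In either case $F(q)\notin\I^+$, so a trivial induction shows $F^n$ is regular on $\P^k\setminus\I^+$, making $\I^+$ its indeterminacy set. For the degree, the homogeneous components of $F^n$ (a priori of degree $d_+^n$) would acquire a common factor precisely when some iterate $F^j$ collapses a hypersurface into $\I^+$; as the only hypersurface $F$ contracts is $\H_\infty$ and its image $\I^-$ is disjoint from $\I^+$, no such collision happens and the algebraic degree of $F^n$ is exactly $d_+^n$. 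The claim for $F^{-n}$ is symmetric. I expect the subtlest point in the whole proof to be the numerical identity $d_+^p=d_-^{k-p}$ in (i): while the duality of dynamical degrees is standard, establishing the non-drop $\delta_p(F)=d_+^p$ in the critical bidegree requires careful bookkeeping of the geometric contribution of $\I^\pm$ to the pullback of $(p,p)$-forms.
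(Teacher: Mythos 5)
The paper itself does not prove this proposition: it is recalled from the references \cite{BS91,DS14rigidity,Fornaess96,FS92,Sibony99}, so your sketch has to be measured against the standard arguments there, and at the decisive points it does not hold up. The crucial step is the inclusion $F(\H_\infty\setminus\I^+)\subset\I^-$, and your contradiction argument for it is a non sequitur: if $q\in\H_\infty\setminus\I^+$ and $p=F(q)\notin\I^-$, then $F$ is indeed a local biholomorphism at $q$ with $F^{-1}\circ F=\mathrm{id}$ near $q$, but this gives information only near $p$, which by assumption is \emph{not} in $\I^-$; there is no mechanism by which it ``forces $F^{-1}$ to be regular across a Zariski-open subset of $\I^-$'' (the restriction of $F$ to $\H_\infty$ can perfectly well be a local biholomorphism at many points). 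The standard proof is algebraic: writing $F^{-1}\circ F=\mathrm{id}$ on $\C^k$ and comparing homogeneous components of top degree $d_+d_-\geq 4>1$, one gets $(F^{-1})^+\circ F^+\equiv 0$ for the leading homogeneous parts, which says exactly that $F(\H_\infty\setminus\I^+)\subset\{(F^{-1})^+=0\}\cap\H_\infty=\I^-$. Your derivation of the \emph{equality} is also flawed: $F|_{\H_\infty\setminus\I^+}$ is not proper in general (for a H\'enon map the fibre over the single point $\I^-$ is all of $\H_\infty\setminus\I^+\cong\C$), and ``the same analysis for $F^{-1}$'' only yields $F^{-1}(\H_\infty\setminus\I^-)\subset\I^+$, which says nothing about every point of $\I^-$ being attained, nor about $\I^-\subset F(\I^-)$; surjectivity requires a genuine argument (e.g.\ the positive-dimensional fibres of the graph of $F$ over points of $\I^-$, combined with $\I^+\cap\I^-=\varnothing$, or invariance plus a dimension count). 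Since your irreducibility claim rests on surjectivity, it inherits this gap.

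In (i), ``matching the fiber dimensions pins down $\dim\I^+=k-p-1$'' is not a proof. The easy half, $\dim\I^++\dim\I^-\leq k-2$, follows from the fact that two disjoint subvarieties of $\H_\infty\simeq\P^{k-1}$ cannot have dimensions summing to $k-1$ (you never invoke this), and the reverse inequality is precisely the nontrivial point: one needs the fibre-dimension estimate for $F\colon\H_\infty\setminus\I^+\to\I^-$ together with an intersection argument showing those fibres must accumulate on $\I^+$. Likewise, for $d_+^p=d_-^{k-p}$ the duality $\deg_p(F)=\deg_{k-p}(F^{-1})$ via the graph is fine, but the equality $\deg_p(F)=d_+^p$ does not follow from ``the only contracted hypersurface is $\H_\infty$ and its image has codimension $>p$'': that criterion governs algebraic stability in bidegree $(1,1)$, not the absence of a drop in the $(p,p)$-degree of a single map. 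The known proofs obtain it by showing that $(F^*\omega_{\FS})^{\wedge p}$ is well defined and carries no mass on $\I^+$, which uses $\dim\I^+=k-p-1$ — so the order of the argument matters and your sketch leaves exactly this, which you yourself flag as the subtle point, unproved. Part (iii) is the soundest piece: granting the inclusion above, the orbit argument and the no-collapse criterion do give that $F^n$ is holomorphic off $\I^+$ with $\deg F^n=d_+^n$; but this only yields $\I(F^n)\subset\I^+$, and the asserted equality $\I(F^n)=\I^+$ still needs a further argument (for instance via the dimension formula of (i) applied to $F^n$ together with the irreducibility of $\I^+$).
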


\begin{example}\label{ex:Henon}
(Generalized) H\'enon maps
on $\mathbb C^2$  
correspond to the case $k=2$ 
 in Definition  \ref{d:regular-aut}.
 In this case, we have $p=k-p=1$ and $d_+=d_-=d,$ the algebraic degree of 
 the map,
 see \cite{BS91,FM89, Sibony99}.
\end{example}

The set
 $\I^+$ (resp. $\I^-$) is attracting for $F^{-1}$ (resp. $F$).
Let $\widetilde W^{\pm}$ be the basin of attraction of $\I^{\pm}$. Set $W^{\pm}:=\widetilde W^{\pm}\cap \mathbb C^k$.
Then the sets 
 $\K^{+}:= \mathbb C^k \setminus W^{-}$
 and  $\K^{-}:= \mathbb C^k \setminus W^{+}$
 are the sets
 of points (in $\mathbb C^k$)
 with bounded orbit for $F$ and $F^{-1}$, respectively.
We have $\overline {\K^+} = \K^{+}\cup \I^+$ and
$\overline {\K^-} = \K^{-}\cup \I^-$
where the closures are taken in $\mathbb P^k$.
We also define $\K := \K^+ \cap \K^-$ which is a compact subset of  $\C^k$.

\medskip

In the terminology of \cite{DS14rigidity}, the set $\overline{\K^+}$ (resp.\ $\overline{\K^-}$)
is $p$-\emph{rigid} (resp.\ $(k-p)$-\emph{rigid}): it supports a unique positive closed
$(p,p)$-current (resp.\ $(k-p,k-p)$-current) of mass 1, that we denote by $\mathbb T_+$ (resp.\ $\mathbb T_-$). 
The
 currents $\mathbb T_{\pm}$
  have no mass on $\H_\infty$ and satisfy the invariance relations
$$F^*(\mathbb{T}_+)=d_+^p \mathbb{T}_+  \qquad \text{ and  }
\quad
F_* (\mathbb{T}_-)= d_-^{k-p} \mathbb T_-$$
as currents on $\C^k$ or $\P^k$.
We call them {\it the main Green currents of $F$}. They can be obtained as intersections of positive closed $(1,1)$-currents with local H\"older continuous potentials in $\C^k$. Therefore, the measure $\mathbb{T}_+\wedge \mathbb{T}_-$ is well-defined and supported by the compact set $\K$. This is the unique invariant probability 
measure of maximal entropy
\cite{DeThelin10,Sibony99}, see also
\cite{BLS93,BS91,BS92,Dujardin04} for the case of dimension $k=2$.

\medskip

Using the above description of the dynamics of $F$, we can fix neighbourhoods
$U_1,U_2$ of $\overline{\K^+}$ and $V_1,V_2$ of $\overline{\K^-}$
such that $F^{-1} (U_i)\Subset U_i$, $U_1 \Subset U_2\Subset \P^k \setminus \mathbb I^-$, $F(V_i)\Subset V_i$, $V_1 \Subset V_2\Subset \P^k \setminus \mathbb I^+$, 
and $U_2 \cap  V_2\Subset \C^k$.
Let $\Omega$ be a real $(p+1, p+1)$-current with compact support in $U_1$.
Assume that there exists
a positive closed $(p + 1,  p + 1)$-current $\Omega'$ with compact support in $U_1$ 
such that $|\Omega|\leq \Omega'$.
Define the norm $\|\Omega\|_{*,U_1}$
  of $\Omega$ as
\[
\|\Omega\|_{*,U_1} := \inf \{\|\Omega'\|\colon |\Omega|\leq \Omega' \}.
\]
Observe that when $\Omega$ is a
$d$-exact current we can write $\Omega=\Omega'-(\Omega'-\Omega)$,
 which is the difference of two positive closed current 
 in the same cohomology class
 in $H^{p+1,p+1}(\P^k, \mathbb R)$.
 Therefore, the norm $\|\cdot\|_{*,U_1}$ 
 is equivalent to the norm given by $\inf \|\Omega^{\pm}\|$, where
$\Omega^{\pm}$ are positive closed currents with compact support in $U_1$ such that $\Omega = \Omega^+ - \Omega^-$. Note that $\Omega^+$ and $\Omega^-$ have the same mass as they belong to the same cohomology class.

\medskip

The following property was obtained by the second author, see \cite[Proposition 2.1]{Dinh05}.

\begin{proposition}\label{p:acta}
Let $R$ be a positive closed $(k-p,k-p)$-current of mass $1$ with compact support in $V_1$
and smooth on $\mathbb C^k$. 
Let $\Phi$ be a
real-valued $(p,p)$-form of class $\mathcal C^2$
with compact support in $U_1\cap \C^k$. Assume that $dd^c \Phi \geq 0$ on $V_2$.
Then there exists a constant $c>0$
independent of $R$ and $\Phi$ such that
$$\big\langle d_-^{-(k-p)n}  (F^n)_*(R)- \mathbb{T}_-, \Phi \big\rangle
 \leq c \, d_-^{-n}\|\ddc\Phi\|_{*,U_1}
 \quad \mbox{ for all } n\geq 0.$$
\end{proposition}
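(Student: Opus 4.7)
The plan is to write $R-\mathbb T_- = dd^c S$ for a well-chosen quasi-potential $S$, push forward by $F^n$, and integrate by parts against $\Phi$. Since $R$ and $\mathbb T_-$ are positive closed $(k-p,k-p)$-currents of mass $1$ on $\mathbb P^k$, they are cohomologous, so by the $dd^c$-lemma there is a $(k-p-1,k-p-1)$-current $S$ with $R-\mathbb T_- = dd^c S$. Using $(F^n)_*\mathbb T_- = d_-^{(k-p)n}\mathbb T_-$ and the commutation of $F_*$ with $dd^c$ gives
\[
d_-^{-(k-p)n}(F^n)_* R - \mathbb T_- \;=\; dd^c U_n, \qquad U_n := d_-^{-(k-p)n}(F^n)_* S,
\]
and, since $\Phi$ has compact support in $U_1\cap \mathbb C^k$, integration by parts yields
\[
\big\langle d_-^{-(k-p)n}(F^n)_* R - \mathbb T_-, \Phi\big\rangle \;=\; \langle U_n, dd^c\Phi\rangle.
\]

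The quasi-potential $S$ must be chosen quantitatively via the Dinh--Sibony super-potential formalism for $(k-p-1,k-p-1)$-currents, exploiting the smoothness of $R$ on $\mathbb C^k$, the H\"older continuity of the local potentials of $\mathbb T_-$, and the inclusions $\supp R,\supp\mathbb T_-\subset V_1$. Writing $S = S^+-S^-$ with $S^\pm$ positive closed of mass bounded by a universal constant, the central quantitative estimate is the mass-decay
\[
\|U_n^\pm\|_{V_2} \;\lesssim\; d_-^{-n}, \qquad U_n^\pm := d_-^{-(k-p)n}(F^n)_* S^\pm.
\]
The factor $d_-^{-n}$ is the combination of two ingredients: the cohomological computation that $F_*$ acts on $H^{k-p-1,k-p-1}(\mathbb P^k,\mathbb R)$ by multiplication by $d_-^{k-p-1}$ (via Poincar\'e duality and the relation $d_+^p = d_-^{k-p}$ of Proposition~\ref{p:general-aut}(i)), which when combined with the normalization $d_-^{-(k-p)n}$ produces an extra contraction by $d_-^{-n}$; and the attracting dynamics $F(V_1)\Subset V_1$ toward $\overline{\K^-}$, which keeps the supports of $U_n^\pm$ essentially inside $V_2$.

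To conclude, by the very definition of $\|\cdot\|_{*,U_1}$ one can choose a positive closed $(p+1,p+1)$-current $\Omega'$ with $|dd^c\Phi|\leq \Omega'$ and $\|\Omega'\|\leq \|dd^c\Phi\|_{*,U_1}+\varepsilon$, and write $dd^c\Phi = (dd^c\Phi+\Omega')-\Omega'$ as a difference of two positive closed currents of mass $\lesssim \|dd^c\Phi\|_{*,U_1}$. Pairing $U_n^+-U_n^-$ against these two pieces, and using Poincar\'e duality for wedge products of positive closed currents of complementary bidegree together with the support and mass control from the previous step, yields
\[
\langle U_n, dd^c\Phi\rangle \;\leq\; c\,d_-^{-n}\,\|dd^c\Phi\|_{*,U_1}.
\]
The hypothesis $dd^c\Phi\geq 0$ on $V_2$ is exactly what makes only an \emph{upper} bound (and not a matching lower bound) come out of this scheme: without it the symmetric estimate would require controlling $U_n^\pm$ against the negative parts of $dd^c\Phi$ on $V_2$, which need not hold for the quasi-potentials. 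The main obstacle, by far, is the mass-decay estimate $\|U_n^\pm\|_{V_2}\lesssim d_-^{-n}$: this is where the full Dinh--Sibony super-potential machinery for currents of intermediate bidegree, and a careful analysis of how the normalized pushforward $d_-^{-(k-p)n}(F^n)_*$ acts on cohomology-trivial perturbations of $\mathbb T_-$ inside $V_2$, are needed.
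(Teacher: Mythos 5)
Your sketch cannot be checked against an argument in this paper: Proposition \ref{p:acta} is quoted from \cite[Proposition 2.1]{Dinh05} and no proof is given here, so I assess the proposal on its own terms. As written it has a genuine gap, and in fact an internal inconsistency, at its central step. You decompose the quasi-potential as $S=S^+-S^-$ with $S^\pm$ \emph{positive closed}; but then $dd^cS=0$, contradicting $dd^cS=R-\mathbb{T}_-\neq 0$, so the objects carrying your ``mass-decay'' estimate do not exist. If you instead mean positive but non-closed pieces (as genuine quasi-potentials are), then the mass of $(F^n)_*S^\pm$ is no longer a cohomological quantity, so the bookkeeping ``$F_*$ acts on $H^{k-p-1,k-p-1}$ by $d_-^{k-p-1}$'' --- which is correct for positive \emph{closed} currents and is indeed how the factor $d_-^{-n}$ appears in Lemma \ref{l:new-norm-star} --- gives no control on $\|U_n^\pm\|$. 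That estimate is the analytic heart of the proposition, and your proposal defers it wholesale to ``the Dinh--Sibony super-potential machinery''; it is a black box, not a proof. The support claim is also unjustified: a quasi-potential of a current supported in $V_1$ is a global current, so $F(V_1)\Subset V_1$ does not by itself localize $U_n^\pm$ near $V_2$.

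More importantly, the quantitative statement you rely on cannot hold with constants independent of $R$. Uniform bounds $\|S^\pm\|\lesssim 1$ and $\|U_n^\pm\|_{V_2}\lesssim d_-^{-n}$ would yield the \emph{two-sided} estimate $|\langle d_-^{-(k-p)n}(F^n)_*R-\mathbb{T}_-,\Phi\rangle|\lesssim d_-^{-n}\|dd^c\Phi\|_{*,U_1}$ uniformly in $R$, i.e.\ \eqref{eq:intro-conv-1} with a constant independent of the current --- exactly what the introduction explains is not available, and the reason the one-sided inequality \eqref{eq:intro-conv-2} under the hypothesis $dd^c\Phi\geq 0$ on $V_2$ is the statement to be proved. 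Conversely, any quantitative construction of $S$ that exploits the smoothness of $R$ produces constants depending on $R$, which the proposition forbids. The known mechanism (as in \cite{Dinh05} and related work on horizontal-like maps) is asymmetric in a different way: quasi-potentials of positive closed currents, built from negative kernels or structural varieties, admit an \emph{upper} bound uniform in the current, though no uniform mass bound; it is this one-sided uniform control, combined with $dd^c\Phi\geq 0$ on $V_2$ and the contraction of the supports of the pushed-forward currents into $V_1\Subset V_2$, that produces an upper bound with $c$ independent of $R$. Your explanation of the role of the positivity hypothesis (controlling $U_n^\pm$ against the negative part of $dd^c\Phi$) does not reflect this, so the main step of the proposal is both unproved and, in its stated uniform form, not provable.
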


Note that in what follows,
 since $\mathbb{T}_-$ is an intersection
of positive closed $(1,1)$-currents 
with
local continuous potentials
\cite{Sibony99},
the intersections
$R\wedge \mathbb{T}_-$ and $\mathbb{T}_+\wedge \mathbb{T}_-$ are well-defined
 and the former
 depends continuously on $R$. In particular, the pairing in the next statement
is meaningful and depends continuously on $R$.

\begin{corollary}\label{c:acta}
Let $R$ be a positive closed $(k-p,k-p)$-current of mass $1$ supported in $V_1$.
Let $\phi$ be a $\Cc^2$ function 
with compact support on $\C^k$
such that $\ddc\phi\geq 0$ in a neighbourhood of $\K_+\cap V_2$.
Then there exists a constant $c>0$
independent of $R$ and $\phi$ such that
\begin{equation}\label{eq:c:acta}
\big\langle d_-^{-(k-p)n}  (F^n)_*(R)- \mathbb{T}_-, \phi\mathbb{T}_+ \big\rangle
 \leq c\, d_-^{-n}\|\ddc\phi\wedge \mathbb{T}_+\|_{*,U_1}
\quad \mbox{ for all } n\geq 0.
\end{equation}
\end{corollary}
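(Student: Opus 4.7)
The strategy is to reduce the corollary to Proposition \ref{p:acta} by regularizing both $R$ and the singular test current $\phi\,\mathbb T_+$. First, I would construct smooth positive closed $(p,p)$-currents $\mathbb T_+^\epsilon$ of mass $1$ converging weakly to $\mathbb T_+$, with $\supp(\mathbb T_+^\epsilon)$ contained in an arbitrarily small neighbourhood of $\overline{\K^+}$ for $\epsilon$ small. A standard way is to regularize the H\"older continuous plurisubharmonic potentials defining $\mathbb T_+$ and to take Monge-Amp\`ere products. Similarly, regularize $R$ to smooth positive closed currents $R^\epsilon$ of mass $1$ with support in $V_1$ (possibly after a harmless shrinking of the support of $R$).

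Set $\Phi^\epsilon := \phi\,\mathbb T_+^\epsilon$. Since $\phi$ has compact support in $\C^k$ and $\supp(\mathbb T_+^\epsilon)$ is close to $\overline{\K^+} \subset U_1 \cup \I^+$, the product $\Phi^\epsilon$ is a smooth real $(p,p)$-form with compact support in $U_1 \cap \C^k$ for $\epsilon$ small. As $\mathbb T_+^\epsilon$ is closed, $\ddc\Phi^\epsilon = \ddc\phi \wedge \mathbb T_+^\epsilon$; on $V_2$, $\supp(\mathbb T_+^\epsilon)$ lies in a neighbourhood of $\K^+ \cap V_2$ where $\ddc\phi \geq 0$ by hypothesis, so $\ddc\Phi^\epsilon \geq 0$ on $V_2$. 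Proposition \ref{p:acta} applied to $R^\epsilon$ and $\Phi^\epsilon$ then yields
\[
\langle d_-^{-(k-p)n}(F^n)_* R^\epsilon - \mathbb T_-,\, \phi\,\mathbb T_+^\epsilon\rangle \leq c\, d_-^{-n}\,\|\ddc\phi \wedge \mathbb T_+^\epsilon\|_{*, U_1}.
\]

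Finally, let $\epsilon \to 0$. The left-hand side converges to that of \eqref{eq:c:acta} by the continuity of intersection with the currents $\mathbb T_\pm$, which admit continuous local potentials (as recalled before the corollary). The principal obstacle is to bound $\limsup_{\epsilon} \|\ddc\phi \wedge \mathbb T_+^\epsilon\|_{*, U_1}$ by a universal multiple of $\|\ddc\phi \wedge \mathbb T_+\|_{*, U_1}$, the extra constant being absorbed into $c$. I would handle this by decomposing $\ddc\phi = \alpha - \beta$ with smooth positive closed $(1,1)$-forms $\alpha, \beta$ on $\P^k$ (for instance $\alpha := \ddc\phi + C\omega_{\FS}$, $\beta := C\omega_{\FS}$ for $C$ large). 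The resulting splitting $\ddc\phi \wedge \mathbb T_+^\epsilon = \alpha \wedge \mathbb T_+^\epsilon - \beta \wedge \mathbb T_+^\epsilon$ expresses $\ddc\phi \wedge \mathbb T_+^\epsilon$ as a difference of positive closed currents supported near $\overline{\K^+} \subset U_1$; their masses are determined by cohomology and so coincide with those of $\alpha \wedge \mathbb T_+$ and $\beta \wedge \mathbb T_+$. Combined with the equivalence of $\|\cdot\|_{*, U_1}$ with the split-decomposition norm for $d$-exact currents (noted before Proposition \ref{p:acta}), this produces the required bound and completes the proof.
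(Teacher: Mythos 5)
Your overall plan (regularize $R$ and the test current, then apply Proposition \ref{p:acta} and pass to the limit) is the same as the paper's, and the first steps are essentially sound: the positivity of $\ddc\phi\wedge\mathbb T_+^\epsilon$ on $V_2$ needs the usual slight shrinking of $V_2$ (a point of $V_2$ near $\partial V_2$ can be $\epsilon$-close to $\K^+\setminus V_2$ without being close to $\K^+\cap V_2$), but that is harmless. The genuine gap is your final step, the bound on $\limsup_{\epsilon}\|\ddc\phi\wedge\mathbb T_+^\epsilon\|_{*,U_1}$. Writing $\ddc\phi=\alpha-\beta$ with $\alpha=\ddc\phi+C\omega_{\FS}$, $\beta=C\omega_{\FS}$ only shows that $|\ddc\phi\wedge\mathbb T_+^\epsilon|\leq(\alpha+\beta)\wedge\mathbb T_+^\epsilon$, whose mass is cohomological and of order $C\sim\|\ddc\phi\|_\infty$. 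This is in general \emph{not} comparable to $\|\ddc\phi\wedge\mathbb T_+\|_{*,U_1}$, which is an infimum over all dominating positive closed currents and can be arbitrarily small compared with $C$ (it vanishes, for instance, when $\phi$ is pluriharmonic near $\supp\mathbb T_+$ while $C>0$). So what your argument actually yields is an estimate with $\|\phi\|_{\Cc^2}$ on the right-hand side, which is strictly weaker than \eqref{eq:c:acta}; and this weaker form is useless for the proof of Theorem \ref{t:henon-mixing}, where the corollary is applied to $\phi^{\pm}$ built from the compositions $g_j\circ f^{n_j-n_1}$, whose $\Cc^2$-norms grow exponentially in the $n_j$'s, and it is precisely the quantity $\|\ddc\phi\wedge\mathbb T_+\|_{*,U_1}$ (bounded uniformly by Lemma \ref{l:new-norm-star}) that saves the day.

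The paper avoids this by not regularizing $\mathbb T_+$ separately: it regularizes the current $\Psi=\phi\,\mathbb T_+$ itself, averaging over small automorphisms $\tau_\epsilon\in\PGL(k+1,\C)$, i.e.\ $\Psi_r=\int\rho_r(\epsilon)\,(\tau_\epsilon)^*(\phi\,\mathbb T_+)$. Then $\ddc\Psi_r$ is an average of the currents $(\tau_\epsilon)^*(\ddc\phi\wedge\mathbb T_+)$, so any positive closed $\Omega'$ supported in $U_1$ with $|\ddc\phi\wedge\mathbb T_+|\leq\Omega'$ is transported, with unchanged mass and support still in $U_2$, to a current dominating $\ddc\Psi_r$; this gives $\|\ddc\Psi_r\|_{*,U_2}\leq\|\ddc\phi\wedge\mathbb T_+\|_{*,U_1}$, after which Proposition \ref{p:acta} is applied with $U_2$ in place of $U_1$. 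With your choice $\Phi^\epsilon=\phi\,\mathbb T_+^\epsilon$, a dominating current for $\ddc\phi\wedge\mathbb T_+$ cannot be transferred to one for $\ddc\phi\wedge\mathbb T_+^\epsilon$, and the $*$-norm control is lost; repairing the proof essentially forces you back to regularizing the product $\phi\,\mathbb T_+$ as the paper does. A smaller additional point: your simultaneous limit in $R^\epsilon$ and $\mathbb T_+^\epsilon$ requires a joint continuity of intersections that should be justified; the paper sidesteps it by first reducing to smooth $R$ (using continuity of the pairing in $R$) and only then regularizing the test current.
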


\proof
As $\mathbb P^k$ is homogeneous, we will use the group $\PGL(k+1,\C)$ of automorphisms of $\P^k$ and suitable convolutions in order regularize the currents $R$ and $\phi\T_+$ and deduce the result from Proposition \ref{p:acta}. Choose local coordinates centered at the identity $\id\in \PGL(k+1,\C)$ so that a small neighbourhood of $\id$ in $\PGL(k+1,\C)$ is identified to the unit ball $\B$ of $\C^{k^2+2k}$. Here, a point of coordinates $\epsilon$ represents an automorphism of $\P^k$ that we denote by $\tau_\epsilon$. Thus, $\tau_0=\id$.

Consider a smooth non-negative function $\rho$ with compact support on $\B$
and of integral 1 with respect to the Lebesgue measure and,
for $0<r\leq 1$,
define
$\rho_r (\epsilon) := r^{-2k^2-4k} \rho (r^{-1}\epsilon)$,
which is supported by $\{|\epsilon|\leq r\}$ . 
This function allows us to define an approximation of the Dirac mass at $0\in\B$ when $r\to 0$.
We define $\Psi:=\phi\T_+$ and consider the following regularized currents
$$R_r:=\int \rho_r(\epsilon) (\tau_\epsilon)^*(R) \quad \text{and} \quad \Psi_r:=\int \rho_r(\epsilon) (\tau_\epsilon)^*(\Psi)
=\int \rho_r(\epsilon) (\phi\circ\tau_\epsilon)(\tau_\epsilon)^*(\T_+),$$
where the integrals are with respect to the Lebesgue measure on $\epsilon\in\B$.

When $r$ is small enough and goes to 0, the current $R_r$ is smooth, positive, closed,
with compact support in $V_1$, and converges to $R$. Since the RHS of \eqref{eq:c:acta} depends continuously on $R$, we can replace $R$ by $R_r$ and assume that $R$ is smooth. When $\epsilon$ goes to 0, $\phi\circ\tau_\epsilon$ converges uniformly to $\phi$ and $(\tau_\epsilon)^*(\T_+)$ converges to $\T_+$. Using that $R$ is smooth and $\T_-$ is a product of $(1,1)$-currents
with
 continuous potentials, we deduce that the LHS of  \eqref{eq:c:acta} is equal to
$$\lim_{r\to 0} \big\langle d_-^{-(k-p)n}  (F^n)_*(R)- \mathbb{T}_-, \Psi_r \big\rangle.$$

Since $\T_+$ is supported by $\overline{\K_+}$ and we have
$\ddc\phi\geq 0$ on a neighbourhood of $\K_+\cap V_2$, we deduce that $\ddc\Psi\geq 0$ on $V_2$. By reducing slightly $V_2$, we still have $\ddc\Psi_r\geq 0$ on $V_2$ for $r$ small enough.
We will use 
the last limit and Proposition \ref{p:acta} for $\Psi_r$ instead of $\Phi$
and $U_2$ instead of $U_1$.
Observe that for $\epsilon$ small enough, since $U_1\Subset U_2$, we have
$\|(\tau_\epsilon)^* (dd^c \Psi)\|_{*,U_2}\leq \|dd^c \Psi\|_{*, U_1}$.
 We deduce
 that the LHS of \eqref{eq:c:acta} is smaller than  or equal to
$$\lim_{r\to 0} c\, d_-^{-n} \|\ddc\Psi_r\|_{*,U_2}
\leq c\, d_-^{-n} \|\ddc\Psi\|_{*,U_1} 
= c\, d_-^{-n} \|\ddc\phi\wedge \T_+\|_{*,U_1}.$$ 
This completes the proof of the corollary.
\endproof

In order to use the above corollary, we will need the following lemmas.

\begin{lemma}\label{l:new-norm-star}
Let  $\kappa \geq 1$ be an integer and
 $g_0, \ldots,g_\kappa$
compactly supported functions on $\C^k$
 with $\|g_j\|_{\Cc^2}\leq 1$. 
 Then there is a constant $c_\kappa>0$ independent of the $g_j$'s
  such that for all $\ell_0, \ldots, \ell_{\kappa}\geq 0$ we have
$$\|dd^c \big((g_0\circ F^{\ell_0})\dots (g_\kappa\circ F^{\ell_\kappa})\big)\wedge \mathbb{T}_+\|_{*,U_1}
\leq c_\kappa.$$
\end{lemma}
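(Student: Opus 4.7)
The plan is to expand $dd^c G$, where $G := \prod_{j=0}^\kappa h_j$ with $h_j := g_j \circ F^{\ell_j}$, via the Leibniz rule, reduce the cross terms using a Cauchy--Schwarz-type inequality, and then invoke the $F^*$-invariance of $\mathbb{T}_+$ to handle the remaining single-factor pieces.

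First, I would expand
\[
dd^c G = \sum_{j=0}^{\kappa} \Big(\prod_{k \neq j} h_k\Big) dd^c h_j + \sum_{0\leq j < k\leq \kappa} \Big(\prod_{m \neq j,k} h_m\Big)\big(dh_j \wedge d^c h_k + dh_k \wedge d^c h_j\big).
\]
Since each $|h_m| \leq \|g_m\|_{\Cc^0} \leq 1$, the scalar prefactors are dominated by $1$ in absolute value. For the cross terms, the positivity of the real $(1,1)$-forms $d(h_j \pm h_k) \wedge d^c(h_j \pm h_k)$ yields the Cauchy--Schwarz-type bound
\[
|dh_j \wedge d^c h_k + dh_k \wedge d^c h_j| \leq dh_j \wedge d^c h_j + dh_k \wedge d^c h_k
\]
in the sense of real $(1,1)$-forms. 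Combined with the identity $dh_j \wedge d^c h_j = \tfrac{1}{2}dd^c h_j^2 - h_j \, dd^c h_j$ and the observation that $h_j^2 = g_j^2 \circ F^{\ell_j}$ with $\|g_j^2\|_{\Cc^2}$ controlled by $\|g_j\|_{\Cc^2}^2\leq 1$, the problem will reduce, after wedging with $\mathbb{T}_+$, to a uniform single-factor bound of the form
\[
\|dd^c(g \circ F^\ell) \wedge \mathbb{T}_+\|_{*,U_1} \leq c
\]
for every compactly supported $\Cc^2$ function $g$ with $\|g\|_{\Cc^2}$ bounded by a universal constant and all $\ell\geq 0$.

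For this base estimate I would use the $F^*$-invariance $\mathbb{T}_+ = d_+^{-p\ell}(F^\ell)^*\mathbb{T}_+$ on $\C^k$, which gives the key identity
\[
dd^c(g \circ F^\ell) \wedge \mathbb{T}_+ = d_+^{-p\ell}(F^\ell)^*\big[dd^c g \wedge \mathbb{T}_+\big].
\]
The current $dd^c g \wedge \mathbb{T}_+$ has compact support in $\supp(g) \cap \overline{\K^+} \Subset U_1 \cap \C^k$ and is dominated in absolute value by the positive closed current $c\, \omega_{\FS} \wedge \mathbb{T}_+$, of mass bounded by $c$.

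The main obstacle is then to translate the above identity into a \emph{uniform} $\|\cdot\|_{*,U_1}$ bound on $d_+^{-p\ell}(F^\ell)^*[dd^c g \wedge \mathbb{T}_+]$: a naive cohomological argument on $\P^k$ suggests a mass of order $d_+^\ell$, since $(F^\ell)^*(\omega_{\FS}\wedge \mathbb{T}_+)$ has mass $d_+^{(p+1)\ell}$. I would overcome this by replacing the bound by $\omega_{\FS}$ with a positive closed $(1,1)$-current better adapted to the $F$-dynamics---in particular exploiting the Green current $T_-$ of $F^{-1}$, which contracts under pullback, $(F^\ell)^* T_- = d_-^{-\ell} T_-$, so that $d_+^{-p\ell}(F^\ell)^*(T_- \wedge \mathbb{T}_+) = d_-^{-\ell}\, T_- \wedge \mathbb{T}_+$ carries arbitrarily small mass. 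The hardest step is to combine such contracting bounds with the product structure (which is what forces the hypothesis $\kappa\geq 1$) in order to obtain an estimate on $\|\cdot\|_{*,U_1}$ uniform in all of $(\ell_0, \ldots, \ell_\kappa)$.
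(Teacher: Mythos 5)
Your first half runs along the same lines as the paper's proof: expand $dd^c\big(\prod_j h_j\big)$ by the Leibniz rule, absorb the bounded factors, control the cross terms by a Cauchy--Schwarz-type inequality, and use the invariance $\mathbb T_+=d_+^{-p\ell}(F^{\ell})^*\mathbb T_+$ to rewrite everything as $d_+^{-p\ell_j}(F^{\ell_j})^*\big[(\text{form dominated by }\omega_{\FS})\wedge\mathbb T_+\big]$. Up to that point the argument is fine (your detour through $dh_j\wedge d^ch_j=\tfrac12 dd^c h_j^2-h_j\,dd^ch_j$ is unnecessary but harmless). The genuine gap is in the last step, precisely where you declare a ``main obstacle'': your claim that $(F^{\ell})^*(\omega_{\FS}\wedge\mathbb T_+)$ has mass of order $d_+^{(p+1)\ell}$ is wrong, and it is this miscalculation that creates the apparent problem. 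The mass is computed by pairing with $\omega_{\FS}^{k-p-1}$ and pushing forward: it equals $\big\langle \omega_{\FS}\wedge\mathbb T_+,(F^{-\ell})^*(\omega_{\FS}^{k-p-1})\big\rangle$, which (all currents involved being positive closed, with $\mathbb T_+$ a product of $(1,1)$-currents with continuous potentials and supports away from $\I^-$) depends only on cohomology classes and hence equals $\|(F^{-\ell})^*(\omega_{\FS}^{k-p-1})\|=d_-^{(k-p-1)\ell}$. Since $d_+^{p}=d_-^{k-p}$, the normalized current $d_+^{-p\ell}(F^{\ell})^*(\omega_{\FS}\wedge\mathbb T_+)$ is positive, closed, supported in $\overline{\K^+}\subset U_1$, and has mass $d_-^{-\ell}\leq 1$; summing over $j$ gives the bound $c_\kappa$ directly. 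In other words, for bidegrees $(q,q)$ with $q>p$ the pullback by $F^{\ell}$ grows like $d_-^{(k-q)\ell}$, not $d_+^{q\ell}$ --- this is exactly the feature of regular automorphisms that makes the lemma work.

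Because of this, the workaround you sketch is both unnecessary and not viable as stated: $T_-$ is a singular positive closed current (supported in $\overline{\K^-}$), so it does not dominate any smooth positive form, and one cannot bound $|dd^c g|$, which is comparable to $\omega_{\FS}$ on $\supp g$, by a multiple of $T_-$; the step you yourself flag as ``the hardest'' (combining the contracting bound with the product structure, uniformly in all $\ell_j$) is left entirely unproven. So the proposal stops short of a proof exactly at the uniform mass estimate, which is the heart of the lemma; replacing your last paragraph by the duality/cohomology computation above closes the gap. (Also, the restriction $\kappa\geq 1$ plays no real role here: the same bound holds for a single factor.)
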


\begin{proof}
Set $\tilde g_j := g_j \circ f^{\ell_j}$ for simplicity. We have
\[\begin{aligned}
dd^c \big(\tilde g_0 \dots \tilde g_\kappa\big) = \sum_{j=0}^\kappa dd^c \tilde g_j \prod_{l\neq j} \tilde g_{l}
+
\sum_{0\leq j\neq l \leq \kappa}
 i \partial \tilde g_j \wedge \bar \partial \tilde g_l 
\prod_{m\neq j,l} \tilde g_m.
\end{aligned}
\]
Since
 $\|g_j\|_{\Cc^2}\leq 1$ 
we have
 $|g_j|\leq 1$. Denote by $\omega_{\FS}$ the Fubini-Study form on $\P^k$.
Then
\[
\Big|
\sum_{j=0}^\kappa dd^c \tilde g_j \prod_{l\neq j} \tilde g_{l}
\Big| \ \lesssim \ \sum_{j=0}^\kappa (F^{\ell_j} )^* \omega_{\FS}
\]
and an application of Cauchy-Schwarz inequality gives
\begin{eqnarray*}
\Big| 
\sum_{0\leq j\neq l\leq \kappa}
i \partial \tilde
g_j 
\wedge \overline \partial 
\tilde g_l
\prod_{m\neq j,l} 
\tilde g_m
\Big|
& \lesssim &
  \sum_{j=0}^\kappa
i \partial \tilde g_j \wedge \overline \partial \tilde g_j \\
& = & \sum_{j=0}^\kappa
(F^{\ell_j})^* ( i \partial g_j \wedge \overline \partial  g_j)\\
&\lesssim & \sum_{j=0}^\kappa
(F^{\ell_j})^* (\omega_{\FS}).
\end{eqnarray*}

As we have $dd^c \big(\tilde g_0 \dots \tilde g_\kappa\big) =0$ near $\H_\infty$, 
its intersection with $\mathbb T_+$
can be computed on $\C^k$.
 We deduce from the above inequalities
 and
  $d_-^{k-p}= d_+^{p}$  that
\begin{equation}\label{eq:sum-omega-T}
\begin{aligned}
\big|dd^c \big((g_0\circ F^{\ell_0})\ldots (g_\kappa\circ F^{\ell_\kappa})\big)\wedge \mathbb{T}_+\big|
& \ \lesssim \ 
\sum_{j=0}^\kappa 
\big(F^{\ell_j} )^* (\omega_{\FS} \big)\wedge \mathbb T_+\\
& \ = \ 
\sum_{j=0}^\kappa
(F^{\ell_j} )^* ( \omega_{\FS}) \wedge   d_+^{-p\ell_j} (F^{\ell_j} )^* \mathbb T_+\\
& \ = \ 
\sum_{j=0}^\kappa
d_-^{-(k-p)\ell_j}
 (F^{\ell_j} )^* \big( \omega_{\FS} \wedge \mathbb T_+\big).
\end{aligned}
\end{equation}

We will use that the $(p+1,p+1)$-current $\omega_{\FS}\wedge \mathbb T_+$ is positive,
closed, of mass 1, and its support is 
contained in $\overline { \mathbb {K}^+ } \subset U_1$.
We have
 $$\|(F^{\ell_j} )^* \big( \omega_{\FS} \wedge \mathbb T_+\big)\| = \big \langle (F^{\ell_j} )^* \big( \omega_{\FS} \wedge \mathbb T_+\big), \omega_\FS^{k-p-1} \big \rangle= \big \langle \omega_{\FS} \wedge \mathbb T_+, (F^{-\ell_j})^*(\omega_\FS^{k-p-1}) \big\rangle,$$
where the last form is positive closed and smooth outside $\I^-$.
The last pairing only depends on the cohomology classes of $\omega_{\FS}$, $\mathbb T_+$,
and $(F^{-\ell_j})^* (\omega_{\FS}^{k-p-1})$.
Hence, it is equal to the mass of 
$(F^{-\ell_j})^*(\omega_\FS^{k-p-1})$, which is equal to $d_-^{(k-p-1)\ell_j}$, see \cite{Sibony99}.
It follows that 
each term in 
 the last sum in \eqref{eq:sum-omega-T} is bounded 
by $1$, which implies that the sum is bounded by  
  $\kappa+1$.
The lemma follows.
\end{proof}

\begin{lemma}\label{l:decomposition-g}
Let $D\Subset D'$ be two bounded domains in $\C^k$.
Let $g$ be a function with compact support in $D$
and such that 
$\|g\|_{\Cc^2}\leq 1$. 
Then there
are a constant $A>0$ independent of $g$ and functions $g^\pm$ with compact supports in $D'$ and $\|g^\pm\|_{\Cc^2}\leq 1$ such that
$$g=A(g^+-g^-),  \qquad i\partial g^+\wedge \dbar g^+ \leq \ddc g^+ \text{ on } D \qquad \text{and} \qquad   i\partial g^-\wedge \dbar g^- \leq \ddc g^- \text{ on } D.$$
\end{lemma}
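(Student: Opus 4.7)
The plan is to look for $g^{\pm}$ of the form $\lambda\psi\pm\epsilon g$, where $0<\epsilon\ll\lambda\ll 1$ are small parameters depending only on $D$ and $D'$ and $\psi$ is a fixed smooth function, compactly supported in $D'$, that is strictly plurisubharmonic on $D$. Such a choice automatically gives $g=\tfrac{1}{2\epsilon}(g^+-g^-)$, so the constant $A:=1/(2\epsilon)$ will depend only on $D$ and $D'$; it only remains to choose $\psi,\lambda,\epsilon$ so that the $\Cc^2$-norm bound and the curvature inequality hold.

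First I would construct $\psi$. Let $\beta:=\tfrac{i}{2}\sum_{j=1}^k dz_j\wedge d\bar z_j$ denote the standard Kähler form on $\mathbb C^k$, and pick a smooth cutoff $\chi$ equal to $1$ on a neighbourhood of $\overline D$ and with compact support in $D'$. Setting $\psi(z):=\chi(z)|z|^2$, one has $\ddc\psi=\beta$ on $D$, $\psi$ is compactly supported in $D'$, and $\|\psi\|_{\Cc^2}\leq C_0$ for some constant $C_0=C_0(D,D')$. In particular there exists $C_1=C_1(D,D')$ with $i\partial\psi\wedge\dbar\psi\leq C_1\beta$ on $D'$.

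Next I would set $g^\pm:=\lambda\psi\pm\epsilon g$ and check the three required properties. Both $g^\pm$ are compactly supported in $D'$, and $\|g^\pm\|_{\Cc^2}\leq \lambda C_0+\epsilon$, so $\|g^\pm\|_{\Cc^2}\leq 1$ is ensured as soon as $\epsilon,\lambda$ are small. From $\|g\|_{\Cc^2}\leq 1$ one gets pointwise bounds $\ddc g\geq -c\beta$ and $i\partial g\wedge\dbar g\leq c'\beta$ on $D$ for absolute constants $c,c'$, so
\[
\ddc g^\pm=\lambda\ddc\psi\pm\epsilon\ddc g\geq(\lambda-c\epsilon)\beta\quad\text{on }D,
\]
while the elementary inequality $i(a+b)\wedge\overline{a+b}\leq 2\,i\,a\wedge\bar a+2\,i\,b\wedge\bar b$ applied to $\partial g^\pm=\lambda\partial\psi\pm\epsilon\partial g$ yields
\[
i\partial g^\pm\wedge\dbar g^\pm\leq 2\lambda^2 C_1\beta+2c'\epsilon^2\beta\quad\text{on }D.
\]
The desired inequality $i\partial g^\pm\wedge\dbar g^\pm\leq\ddc g^\pm$ therefore reduces to the purely numerical condition $2\lambda^2 C_1+2c'\epsilon^2\leq\lambda-c\epsilon$.

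Finally, this last condition can be arranged by first fixing $\lambda>0$ small enough that $2\lambda^2 C_1\leq\lambda/2$ (i.e.\ $\lambda\leq 1/(4C_1)$), and then choosing $\epsilon>0$ small enough that $c\epsilon+2c'\epsilon^2\leq\lambda/2$; both choices depend only on $D$ and $D'$. Setting $A:=1/(2\epsilon)$ then completes the construction. I do not foresee any serious obstacle here: the argument is a quantitative version of the standard observation that adding a sufficiently strictly plurisubharmonic reference function dominates any prescribed first-order perturbation in the sense of complex Hessians.
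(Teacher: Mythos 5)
Your proof is correct and follows essentially the same route as the paper: both constructions add a multiple of a cut-off of $\|z\|^2$ (strictly plurisubharmonic on $D$) to a small multiple of $g$, so that the complex Hessian is bounded below linearly in the small parameter while $i\partial g^\pm\wedge\dbar g^\pm$ is only quadratic, and then scale to meet the $\Cc^2$ bound. The only cosmetic difference is that you split $g$ symmetrically as $\pm\epsilon g$ in both $g^\pm$, whereas the paper puts the $g$-part only in $g^+$; the mechanism is identical.
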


\begin{proof}
Let $\rho$ be a smooth non-negative function, compactly supported on $D'$ and equal to $1$ in a neighbourhood of $\overline D$. Observe that $\rho g=g$.
We denote by $z$ the coordinates of $\C^k$.
Since $\|g\|_{\Cc^2}\leq 1$ and $g$ has compact support in $D$, there exists a constant $A_1 >0$ independent of $g$ 
 such that $|dd^c g| \leq  A_1 \ddc(\|z\|^2)$.
Set $g^+ := A^{-1} \rho ( g + 2A_1 \|z\|^2)$ and $g^- := 2A^{-1}A_1\rho \|z\|^2$ for some constant $A>0$.
It is not difficult to check that
 we have 
 $\ddc g^\pm\geq A^{-1}A_1\ddc (\|z\|^2)$ 
 on $D$,
 $\|i\partial g^\pm\wedge \dbar g^\pm\|_\infty=O(A^{-2})$ on $D$,
 $g = A(g^+-g^-)$,
 and $\|g^{\pm}\|_{\Cc^2}=O(A^{-1})$.
Taking $A$ large enough gives the lemma.
\end{proof}

\section{Exponential mixing of all orders for {H\'e}non maps and further remarks}\label{s:mixing-Henon}

Throughout this section (except for Remarks
\ref{r:non-elementary},
\ref{r:autCK}, and \ref{r:HL}), 
$f$ denotes a H\'enon map on  $\C^2$ of algebraic degree $d=d_+=d_-\geq 2$.
Define $F:=(f, f^{-1})$. It is not difficult to check that $F$ is a regular automorphism of $\C^4=\C^2\times\C^2$.
We will use the notations and the results of Section \ref{s:regular}  
with $k=4$ and $p=2$.
We denote in this section by $T_{\pm}$ the Green $(1,1)$-currents
of $f$, and reserve the notation $\mathbb T_{\pm}$
for the main 
Green currents of $F$. Observe that $\mathbb T_+ = T_+\otimes T_-$ 
and $\mathbb T_- = T_-\otimes T_+$,
see \cite[Section 4.1.8]{Federer} for the tensor (or cartesian)
product of currents.
We denote by $K^{\pm}$ the sets of points of bounded orbit
  for $f^{\pm 1}$.
  The wedge product  $\mu:= T_+\wedge T_-$ is well defined, and is the measure of maximal entropy
  of $f$ \cite{{BLS93,BS91,BS92,Sibony99}}. Its support is contained in the compact set $K=K^+\cap K^-$.
  We have $\K^+=K^+\times K^-$ and $\K^-=K^-\times K^+$. Note also that the diagonal $\Delta$ of $\C^2\times\C^2$ satisfies $\overline\Delta\cap\I^+=\varnothing$ and $\overline\Delta\cap\I^-=\varnothing$ in $\P^4$, see also \cite{Dinh05}.

\medskip

We now
prove Theorem \ref{t:henon-mixing}.
By a standard interpolation \cite{Triebel95}
(see for instance \cite[pp. 262-263]{Dinh05} and 
\cite[Corollary 1]{Dolgopyat98}  for similar occurrences) 
it is enough to prove the statement
for $\gamma=2$, i.e., in the case where all 
the functions $g_j$ are of class $\mathcal C^2$.
The statement is clear for $\kappa=0$, i.e.,
 for one test function.
 By induction, we can assume that the
statement holds for up to $\kappa$ test functions
and prove it for $\kappa+1 \geq 1$ test functions,
i.e., show that 
$$
\Big|
\langle \mu,
 g_0 ( g_1 \circ  f^{n_1} )
\dots
 (g_{\kappa} \circ f^{n_{\kappa}})
\rangle
- \prod_{j=0}^{\kappa} \langle  \mu, g_j
\rangle
\Big|
\lesssim \Big(
\prod_{j=0}^{\kappa} 
\|g_j\|_{\Cc^2}
\Big) \cdot
d^{-\min_{0\leq j \leq \kappa-1} ( n_{j+1}- n_j)/2}.
$$
Recall that $n_0=0$. The induction assumption implies that we are allowed to modify each $g_j$ by adding a constant. 
Moreover, using the invariance of $\nu$, the desired estimate does
not change if we replace $n_j$ by $n_j-1$
 for $1\leq j\leq \kappa$
  and $g_0$ by $g_0 \circ f^{-1}$. 
Therefore, we can for convenience assume that $n_1$ is even.

\medskip

We fix 
a large bounded domain 
$B\subset \mathbb C^2$ satisfying 
\begin{equation*}
K \subset B, \quad
K^-\cap B \subset f(B),
\quad
\mbox{ and }
\quad
K^+ \cap B \subset f^{-1} (B). 
\end{equation*}
By induction, the inclusions above imply that
\begin{equation}\label{eq:properties-neigh-K}
K \subset B, \quad
K^-\cap B \subset f^n(B),
\quad
\mbox{ and }
\quad
K^+ \cap B \subset f^{-n} (B)
\quad \mbox{ for all } n \geq 1. 
\end{equation}
Because of Lemma \ref{l:decomposition-g}
and the fact that we 
are only interested in the values of the $g_j$'s on the support of $\mu$,
we
can assume that all the $g_j$'s
are compactly supported in $\mathbb C^2$
and  satisfy
\begin{equation}\label{eq-simpl-g}
\|g_j\|_{\Cc^2}\leq 1
\mbox{ on } \mathbb C^2\quad \mbox{ and } 
\quad i\partial g_j \wedge \overline \partial g_j\leq dd^c g_j \mbox{ on } B.
\end{equation}
For simplicity, write
$h:= g_1 (g_2 \circ f^{n_2-n_1} ) \dots ( g_{\kappa} \circ f^{n_{\kappa}-n_1})$.
We need to prove that
\begin{equation*} 
|\langle \mu, g_0  ( h\circ f^{n_1}) 
\rangle - \langle\mu, g_0\rangle \cdot \langle\mu, h \rangle|\lesssim  d^{- \min_{0\leq j \leq \kappa-1} (n_{j+1}-n_j)/2}
\end{equation*}
since this estimate, together with the induction assumption applied to $\langle\mu,h\rangle$, 
would imply the desired statement. In order to obtain the result, we will prove separately the 
two
 estimates
\begin{equation}\label{eq:goal-mixing-1}
\langle \mu, g_0  ( h\circ f^{n_1}) 
\rangle - \langle\mu, g_0\rangle \cdot \langle\mu, h \rangle \lesssim  d^{- \min_{0\leq j \leq \kappa-1} (n_{j+1}-n_j)/2}
\end{equation}
and
\begin{equation}\label{eq:goal-mixing-2}
-\langle \mu, g_0  ( h\circ f^{n_1}) 
\rangle + \langle\mu, g_0\rangle \cdot \langle\mu, h \rangle \lesssim  d^{- \min_{0\leq j \leq \kappa-1} (n_{j+1}-n_j)/2}.
\end{equation}
 
 \medskip
 
Set $M:=10 \kappa$
and fix a smooth function 
$\chi$ with compact support in $\C^2$ and equal to $1$ in a neighbourhood of $\overline{B}$.
Consider the following four functions,
which will later allow us to produce some p.s.h. test functions:
$$g_0^+:=\chi\cdot (g_0+M ) \quad \text{and} \quad 
h^+:=\chi\cdot  (g_1+M)
(g_2 \circ f^{n_2-n_1} + M)
\dots
(g_\kappa\circ f^{n_\kappa-n_1}+M)$$ 
and 
$$g_0^-:=\chi\cdot(M-g_0) 
\quad \text{and} \quad 
h^-:=\chi\cdot 
\big((g_1+M)
(g_2 \circ f^{n_2-n_1} + M)\dots(g_\kappa\circ f^{n_\kappa-n_1}+M)-2(M+1)^\kappa\big).$$  
Recall that $n_0=0$. To prove \eqref{eq:goal-mixing-1} and \eqref{eq:goal-mixing-2}, it is enough to show that 
\begin{equation}\label{eq:goal-mixing-3}
\langle \mu, g_0^+  ( h^+\circ f^{n_1}) 
\rangle - \langle\mu, g_0^+\rangle \cdot \langle\mu, h^+ \rangle \lesssim  d^{- n_1/2}
\end{equation}
and
\begin{equation}\label{eq:goal-mixing-4}
\langle \mu, g_0^-  ( h^-\circ f^{n_1}) 
\rangle - \langle\mu, g_0^-\rangle \cdot \langle\mu, h^- \rangle \lesssim  d^{-n_1/2}.
\end{equation}
Indeed, we observe that
$\chi$ does not play any role in 
\eqref{eq:goal-mixing-3} and \eqref{eq:goal-mixing-4}. Hence,
the difference between the LHS of \eqref{eq:goal-mixing-3} and the one of \eqref{eq:goal-mixing-1} (resp. of \eqref{eq:goal-mixing-4} and of \eqref{eq:goal-mixing-2}) is a finite combination of expressions involving no more than
$\kappa$
 functions among $g_0,\ldots, g_\kappa$,
 that we can estimate using the induction hypothesis on the mixing of order up to $\kappa-1$. 
 It remains to prove the two inequalities
 \eqref{eq:goal-mixing-3} and \eqref{eq:goal-mixing-4}.

\medskip

Denote by $(z,w)$
the coordinates on $\C^{4} = \mathbb C^2 \times \mathbb C^2$ and define
$$\phi^\pm(z,w):= \, g_0^\pm(w)\, h^\pm(z).$$
We have the following lemma for a fixed domain $U_1$ as in Section \ref{s:regular}.

\begin{lemma}\label{l:phi}
The functions $\phi^\pm$
satisfy
\begin{enumerate}
\item[{\rm (i)}] $dd^c \phi^\pm  \wedge \mathbb T_+\geq 0 $ on $B\times B$;
\item[{\rm (ii)}] $\|dd^c \phi^\pm \wedge \mathbb T_+\|_{*,U_1} \leq c_\kappa$,
\end{enumerate}
where $c_\kappa$ is a positive constant depending on $\kappa$, but not on the $g_j$'s and the $n_j$'s.
\end{lemma}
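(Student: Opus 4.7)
The plan is to prove (i) by reducing the positivity of $dd^c\phi^\pm \wedge \mathbb T_+$ on $B\times B$ to a (logarithmic) plurisubharmonicity statement for $\phi^\pm$ on an open neighbourhood of $\supp(\mathbb T_+)\cap (B\times B)$, and to prove (ii) by expanding the products defining $\phi^\pm$ and invoking Lemma \ref{l:new-norm-star} term by term.

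First I would set up common preliminaries. On $B\times B$ we have $\chi\equiv 1$, so $\phi^+(z,w) = (g_0(w)+M)\,H(z)$ and $\phi^-(z,w) = (M-g_0(w))\bigl(H(z) - 2(M+1)^\kappa\bigr)$, where $H := \prod_{j=1}^\kappa u_j$ with $u_j := g_j\circ f^{n_j-n_1} + M \in [M-1, M+1]$; in particular $u_j\geq 2$. By \eqref{eq-simpl-g} and the holomorphicity of $f^{n_j-n_1}$, the inequality $i\partial u_j\wedge \bar\partial u_j \leq dd^c u_j$ holds on the open set $V_j := f^{-(n_j-n_1)}(B)$; by \eqref{eq:properties-neigh-K} this set contains $K^+\cap B$, as does $V := B\cap \bigcap_{j=1}^\kappa V_j$. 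Since $u_j\geq 2$, this upgrades to $u_j\, dd^c u_j \geq 2i\partial u_j\wedge \bar\partial u_j$, i.e., $\log u_j$ is psh on $V_j$. Summing, $\log H = \sum_j \log u_j$ is psh on $V$, yielding $dd^c H \geq 0$ and
\[2i\partial H \wedge \bar\partial H \,\leq\, H\, dd^c H \,\leq\, (M+1)^\kappa\, dd^c H \quad \text{on } V.\]
The same argument applied to $u_0 := g_0 + M$ shows $\log(g_0+M)$ is psh in $w$ on $B$.

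For the $\phi^+$ case of (i), $\log\phi^+(z,w) = \log(g_0(w)+M) + \log H(z)$ is psh on the open set $V\times B$, so $\phi^+$ itself is psh there. Because $\supp(\mathbb T_+)\cap (B\times B) \subset (K^+\cap B)\times (K^-\cap B) \subset V\times B$ and $\mathbb T_+$ is positive and closed, the wedge $dd^c\phi^+\wedge \mathbb T_+ \geq 0$ on $B\times B$. The $\phi^-$ case, which I expect to be the main obstacle, requires a direct expansion on $V\times B$:
\[dd^c\phi^- = (M-g_0)\,dd^c H + [2(M+1)^\kappa - H]\,dd^c g_0 - 2i\bigl(\partial g_0\wedge\bar\partial H + \partial H\wedge\bar\partial g_0\bigr).\]
The first two summands are manifestly non-negative, but the cross term has no definite sign. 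To control it I would apply the weighted Cauchy--Schwarz inequality
\[-2i\bigl(\partial g_0\wedge\bar\partial H + \partial H\wedge\bar\partial g_0\bigr) \,\geq\, -\tfrac{2}{r}\, i\partial g_0\wedge\bar\partial g_0 \,-\, 2r\cdot i\partial H\wedge\bar\partial H \quad (r>0),\]
use \eqref{eq-simpl-g} on $g_0$ and the above estimate on $H$, and pick $r := 2/(M+1)^\kappa$. The resulting bound
\[dd^c\phi^- \,\geq\, \Bigl[M-g_0 - \tfrac{2H}{(M+1)^\kappa}\Bigr]dd^c H + \bigl[(M+1)^\kappa - H\bigr]dd^c g_0 \,\geq\, (M-3)\,dd^c H \,\geq\, 0\]
on $V\times B$ (using $|g_0|\leq 1$, $H\leq (M+1)^\kappa$, and $M=10\kappa\geq 10$) then gives $dd^c\phi^-\wedge \mathbb T_+\geq 0$ on $B\times B$ by the same wedging argument. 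This step also clarifies the role of the constants $M=10\kappa$ and $2(M+1)^\kappa$ hard-coded in the definitions of $g_0^\pm$ and $h^\pm$: they are calibrated exactly so that both brackets above become non-negative.

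For (ii), I would expand the products defining $\phi^\pm$ as finite sums (of size $O(2^\kappa)$) of monomials of the form $c_S\cdot \chi(w)\chi(z)\, g_0(w)^{\1_{0\in S}}\prod_{j\in S,\,j\geq 1}(g_j\circ f^{n_j-n_1})(z)$, with $|c_S|\lesssim (M+1)^\kappa$, indexed by $S\subset \{0,\dots,\kappa\}$. Each monomial is a product of at most $\kappa+3$ functions of the form $\tilde g \circ F^\ell$ on $\mathbb C^4$ (allowing $\ell=0$ for $\chi(w)$, $\chi(z)$, and $g_0(w)$), with $\Cc^2$-norms bounded by constants depending only on $\chi$. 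Running the argument of Lemma \ref{l:new-norm-star} on each monomial (absorbing multiplicative factors involving $\|\chi\|_{\Cc^2}$ into the constants) and summing over the $O(2^\kappa)$ terms yields the uniform bound $\|dd^c\phi^\pm\wedge\mathbb T_+\|_{*,U_1}\leq c_\kappa$ depending only on $\kappa$.
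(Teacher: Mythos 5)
Your argument reaches the right conclusion, but for part (i) it takes a genuinely different route from the paper's. The paper does not factor $\phi^\pm$ into a $w$-block and a $z$-block: it expands $dd^c\phi^\pm$ directly over all $\kappa+1$ factors $\tilde g_j+M$, bounds the diagonal sum below by $(M-1)^\kappa\sum_j dd^c\tilde g_j$, controls all cross terms at once via Cauchy--Schwarz and \eqref{e:ddb-ddc} by $(\kappa+1)(M+1)^{\kappa-1}\sum_j dd^c\tilde g_j$, and concludes from the numerical inequality forced by $M=10\kappa$; the same computation treats $\phi^+$ and $\phi^-$ in parallel. You instead exploit log-plurisubharmonicity of each factor $u_j=g_j\circ f^{\ell_j}+M$: this disposes of $\phi^+$ almost for free (it is the exponential of a sum of psh functions on $V\times B$), and reduces $\phi^-$ to a two-block product $a(w)b(z)$ with a single weighted Cauchy--Schwarz. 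Your localization is the same as the paper's: positivity of $dd^c\phi^\pm$ on a neighbourhood of $(K^+\cap B)\times(K^-\cap B)$, obtained from \eqref{eq:properties-neigh-K} and \eqref{eq-simpl-g}, suffices because $\T_+$ is positive, closed, supported in $\overline{K^+\times K^-}$, and has continuous local potentials. For (ii) you and the paper do the same thing (reduce to Lemma \ref{l:new-norm-star}); your termwise expansion, with the $\chi$ factors providing compact support and the coefficients bounded in terms of $M=10\kappa$, is a reasonable way of making that reduction explicit.

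Two slips in the $\phi^-$ computation, neither fatal. First, the product rule gives $dd^c\phi^-=(M-g_0)\,dd^cH+\bigl(2(M+1)^\kappa-H\bigr)\,dd^cg_0-i\bigl(\partial g_0\wedge\dbar H+\partial H\wedge\dbar g_0\bigr)$, with no factor $2$ on the cross term; your extra factor only over-estimates the term you then control, so the conclusion is unaffected. Second, and more substantively, the inequality $2\,i\partial H\wedge\dbar H\le H\,dd^cH$ does not follow from the mere psh-ness of $\log H$, which only yields the factor-$1$ inequality $i\partial H\wedge\dbar H\le H\,dd^cH$; the factor-$2$ version amounts to $dd^c\log H\ge i\partial\log H\wedge\dbar\log H$, and summing $dd^c\log u_j\ge i\partial\log u_j\wedge\dbar\log u_j$ over the $\kappa$ factors loses a factor $\kappa$ by Cauchy--Schwarz. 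This is easily repaired: either use $u_j\ge M-1$ to get $dd^c\log u_j\ge (M-2)\,i\partial\log u_j\wedge\dbar\log u_j$, hence $dd^c\log H\ge \frac{M-2}{\kappa}\,i\partial\log H\wedge\dbar\log H\ge i\partial\log H\wedge\dbar\log H$ since $M=10\kappa$; or simply run your final estimate with the factor-$1$ inequality and the same choice $r=2(M+1)^{-\kappa}$, which still leaves the coefficient of $dd^cH$ bounded below by $M-5\ge 5>0$. With either fix your proof is complete.
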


\begin{proof}
(i) For simplicity, we set $\ell_0 = \ell_1 :=0$ and $\ell_j := n_j-n_1$. Define also
$\tilde g_{j} := g_j \circ f^{\ell_j}$. In what follows $\tilde g_0$ depends on $w$ and $\tilde g_j$ depends on $z$ when $j\geq 1$.
Observe that by the invariance property of $K^+\cap B$ in \eqref{eq:properties-neigh-K} and the constraints in \eqref{eq-simpl-g}, the following inequalities
 hold in a neighbourhood of $K^+\cap B$:
\begin{equation}\label{e:ddb-ddc}
i\partial \tilde g_j\wedge \dbar \tilde g_j = (f^{\ell_j})^*(i\partial g_j\wedge \dbar g_j) \leq (f^{\ell_j})^*(\ddc g_j) = \ddc \tilde g_j.
\end{equation}
In particular, we have $\ddc \tilde g_j\geq 0$ in a neighbourhood of $K^+\cap B$.
Note that for $\tilde g_0 = g_0$ the properties hold on $B$, which contains $K^-\cap B$.

Now, since $\mathbb T_+$ is closed, positive, and supported by $\overline{\mathbb K^+} =\overline{K^+\times K^-}$, in order to prove the first assertion
 it is enough to show
that $dd^c \phi^\pm\geq 0$ on a neighbourhood of $(K^+\cap B)\times (K^-\cap B)$ in $\C^4$
where $\chi=1$. 
In what follows, we only work on such a neighbourhood. 
We have
\[
dd^c \phi^+ 
=
\sum_{j=0}^{\kappa} dd^c \tilde g_j
\prod_{l\neq j} (\tilde g_l
+ M)
+
\sum_{0\leq  j\neq l \leq \kappa}
i \partial \tilde 
g_j
\wedge \overline \partial 
\tilde g_l
\prod_{m\neq j,l} (
\tilde g_m
+ M),
\]
where we recall that $\tilde g_0$ is $\tilde g_0 (w)$
and the other $\tilde g_j$'s are $\tilde g_j (z)$ for $1\leq j \leq \kappa$.
For the first term in the 
RHS of the last expression, we have 
$$\sum_{j=0}^{\kappa} dd^c \tilde g_j \prod_{l\neq j} (\tilde g_l + M) \geq (M-1)^{\kappa} \sum_{j=0}^{\kappa} dd^c \tilde g_j .$$
For the second term, an application of Cauchy-Schwarz inequality
and \eqref{e:ddb-ddc} give
\[
\begin{aligned}
\big| 
\sum_{0\leq j\neq l \leq \kappa}
i \partial \tilde
g_j
\wedge \overline \partial 
\tilde g_l
\prod_{m\neq j,l} (
\tilde g_m
+ M)
\big|
& \leq
(\kappa+1) (M+1)^{\kappa-1} \sum_{j=0}^{\kappa}
i \partial \tilde g_j \wedge \overline \partial \tilde g_j \\
&\leq
(\kappa+1) (M+1)^{\kappa-1} \sum_{j=0}^\kappa
dd^c \tilde g_j.\\
\end{aligned}\]
It follows that
\begin{eqnarray*}
dd^c \phi^+ &\geq& 
(M-1)^{\kappa} \sum_{j=0}^{\kappa} dd^c \tilde g_j
- (\kappa+1) (M+1)^{\kappa-1} \sum_{j=0}^\kappa dd^c \tilde g_j \\
&=&(M-1)^{\kappa}\Big[1-{(\kappa+1)\over M+1}\big(1+{2\over M-1}\big)^\kappa\Big] \sum_{j=0}^{\kappa} dd^c \tilde g_j,
\end{eqnarray*}
which gives $dd^c \phi^+ \geq 0$ since the choice
 $M=10\kappa$ implies that
  $(1+{2\over M-1}\big)^\kappa<(1+{1\over \kappa}\big)^\kappa<3$.
Similarly, in the same way,
we also have
\[
dd^c \phi^-
\geq 
(M-1)^\kappa \sum_{j=0}^\kappa
dd^c \tilde g_j 
- (M+1)^{\kappa-1} \sum_{0\leq j\neq  l\leq \kappa} \big|
i \partial \tilde
g_j
\wedge \overline \partial 
\tilde g_l
\big|,
\]
which gives $dd^c \phi^-\geq 0$.
This concludes the proof of the first assertion of the lemma. 

\medskip

(ii) The second assertion of the lemma is a consequence of Lemma \ref{l:new-norm-star}.
\end{proof}

\proof[End of the proof of Theorem \ref{t:henon-mixing}]
Recall that it remains to prove  \eqref{eq:goal-mixing-3} and \eqref{eq:goal-mixing-4}. 
Since $\mu$ is invariant and $n_1$ is even, we have
\[
\langle \mu, g_0^\pm\cdot ( h^\pm\circ f^{n_1})  
\rangle
=
\big\langle  \mu,
( g_0^\pm \circ f^{-n_1/2} )( h^\pm\circ f^{n_1 /2}) \big\rangle.
\]
We now transform the last integral on $\mathbb C^2$ to an integral on $\mathbb C^{4}$ in order to use the dynamical system $F=(f,f^{-1})$ on $\C^{4}$
introduced above.
We are using the coordinates $(z,w)$ on $\C^{4} = \mathbb C^2 \times \mathbb C^2$. We will also use the diagonal of $\C^2\times \C^2$,
 which is given by $\Delta=\{(z,w)\colon z=w\}$. 

Recall that we have $\mu=T_+\wedge T_-$ and that
 the currents $T_\pm$ have local continuous potentials in $\C^2$. It follows that 
the intersections of $\T_\pm$ with positive closed currents on $\C^4$ are meaningful.
Moreover, the invariance of $T_\pm$ implies that $(F^{n_1/2})_*(\T_+)=d^{-n_1}\T_+$ on $\C^4$.
Thanks to the above identities, we have
\begin{equation}\label{eq:develop-mixing}
\begin{aligned}
\langle \mu, g_0^\pm \cdot ( h^\pm\circ f^{n_1})   \rangle
& =
\big\langle 
T_+ \wedge T_-,  (g_0^\pm \circ f^{-n_1/2} )(h^\pm \circ f^{n_1/2}) \big\rangle\\
& =
\big\langle
(T_+ \otimes  T_-) \wedge [\Delta],
(g_0^\pm \circ f^{-n_1/2} (w) )(
 h^\pm\circ f^{n_1 /2} (z)) \big\rangle\\
 &=
\big \langle
\mathbb T_+  \wedge [\Delta],
(F^{n_1/2})^*(\phi^\pm) \big\rangle
 \\
 &=
 \big\langle
d^{-n_1} \mathbb T_+\wedge  (F^{n_1/2})_* [\Delta],
 \phi^\pm  \big\rangle\\
 & =
 \big \langle
d^{-n_1} (F^{n_1/2})_* [\Delta],
 \phi^\pm   \mathbb T_+ \big\rangle.
 \end{aligned}
\end{equation}

We apply 
Corollary \ref{c:acta}
with the functions 
$\phi^\pm=g_0^\pm(w) \cdot  h^\pm (z)$ 
instead of $\phi$
and the current $[\Delta]$ instead of $R$. For this purpose, since $\overline\Delta\cap\I^+=\varnothing$, we can choose a suitable open set $V_1$ containing $\Delta$. We 
also fix an open set
$V_2$ as in Section \ref{s:regular}.
Since $B$ is large enough,
Lemma \ref{l:phi} implies that $\ddc\phi^\pm\geq 0$ on a neighbourhood of $\K^+\cap V_2$. Thus, we obtain from Corollary \ref{c:acta} that
\[
 \langle
d^{-n_1} (F^{n_1/2})_* [\Delta] - \mathbb T_-,
 \phi^{\pm}   \mathbb T_+ \rangle \lesssim  d^{- n_1/2}
\]
or equivalently
\begin{equation}\label{eq:from-cor-equiv}
 \langle d^{-n_1} (F^{n_1/2})_* [\Delta],
 \phi^{\pm}
  \mathbb T_+ \rangle 
 - \langle \T_-, 
 \phi^{\pm} 
   \mathbb T_+ \rangle\lesssim  d^{- n_1/2}.
\end{equation}
Together,
\eqref{eq:develop-mixing}, \eqref{eq:from-cor-equiv},
and the fact that 
\[\langle \mathbb T_-,  
\phi^{\pm}
 \mathbb T_+\rangle =\langle \T_+\wedge \T_-, 
 \phi^{\pm}\rangle 
 =\langle\mu \otimes \mu,  g_0^\pm(w) \cdot 
 h^\pm (z)\rangle = \langle\mu, g_0^\pm\rangle \cdot \langle \mu, h^\pm\rangle \]
 give the desired estimates \eqref{eq:goal-mixing-3} and \eqref{eq:goal-mixing-4}. 
The proof of Theorem \ref{t:henon-mixing} is complete.
\endproof

\begin{remark}\label{r:non-elementary} \rm
Friedland and Milnor \cite{FM89}
proved that any polynomial automorphism of $\C^2$ is either conjugate
to an elementary automorphism which preserves a
 fibration
by parallel complex lines or to a H\'enon map as above. So, our results
apply to all automorphisms of $\C^2$ which are not conjugated to an  elementary one.
\end{remark}

\begin{remark}\label{r:autCK} \rm
When $f$ is a regular automorphism of $\C^k$ with $k$ even
and $p=k/2$, the map $(f,f^{-1})$ is regular on $\C^{2k}$.
The same proof as above gives us the exponential mixing of all orders
and the CLT for $f$.
 The results still hold for every regular automorphism but the proof
 requires some extra technical arguments
  that we choose to do not  present here for simplicity, see for instance
  de Thélin and Vigny
   \cite{DTV10, Vigny15}.
\end{remark}

\begin{remark}\label{r:HL} \rm
When $f$ is a horizontal-like map 
such that the main dynamical degree is larger than the other dynamical degrees,
 the same strategy gives the exponential mixing of all orders
 and the CLT, 
 see the papers
\cite{DNS08,DS06AIF}
by Nguyen, Sibony and the second author, and in particular
 \cite{DNS08}
  for the necessary estimates.
   In particular, these results hold for all H{\'e}non-like maps in dimension 2, 
   see also Dujardin
    \cite{Dujardin04}.
\end{remark}

\begin{remark}\label{r:Kahler} \rm
 In the companion paper \cite{BD23},
 we explain how to adapt our strategy to get the
 exponential mixing of all orders and the Central Limit Theorem for 
automorphisms of compact K\"ahler manifolds with simple action on cohomology.
As the proof in that case requires the theory of super-potentials, which is not needed for H{\'e}non maps,
we choose to do not 
present it here.
\end{remark}

\end{document}